\numberwithin{equation}{section}
\title[]{Electromagnetic waves generated by  a dielectric moving at a constant speed}
\author{Manas Kar}
\address{Indian Institute of Science Education and Research Bhopal, Bhopal By-pass Road, Bhauri Bhopal, 462066,
Madhya Pradesh, India.}
\email{manas@iiserb.ac.in}
\author{Mourad Sini}
\address{Radon Institute, Austrian Academy of Sciences, Altenbergerstrasse 69, Linz, Austria}
\email{mourad.sini@oeaw.ac.at}
\newcommand*{\R}{\mathbb{R}}
\renewcommand{\div}{\ensuremath{\mathrm{div}}}
\newcommand{\curl}{\ensuremath{\mathrm{curl}}}
\theoremstyle{plain}
\newtheorem{theorem}{Theorem}
\newtheorem{lemma}[theorem]{Lemma}
\newtheorem{proposition}[theorem]{Proposition}
\theoremstyle{definition}
\theoremstyle{remark}
\newtheorem{remark}[theorem]{Remark}
\newtheorem{assumption}{Assumption}
\DeclareMathOperator{\real}{Re}
\def\vint_#1{\mathchoice%
          {\mathop{\kern 0.2em\vrule width 0.6em height 0.69678ex
depth -0.58065ex
                  \kern -0.8em \intop}\nolimits_{\kern -0.4em#1}}%
          {\mathop{\kern 0.1em\vrule width 0.5em height 0.69678ex
depth -0.60387ex
                  \kern -0.6em \intop}\nolimits_{#1}}%
          {\mathop{\kern 0.1em\vrule width 0.5em height 0.69678ex
              depth -0.60387ex
                  \kern -0.6em \intop}\nolimits_{#1}}%
          {\mathop{\kern 0.1em\vrule width 0.5em height 0.69678ex
depth -0.60387ex
                  \kern -0.6em \intop}\nolimits_{#1}}}
\def\vintslides_#1{\mathchoice%
          {\mathop{\kern 0.1em\vrule width 0.5em height 0.697ex depth -0.581ex
                  \kern -0.6em \intop}\nolimits_{\kern -0.4em#1}}%
          {\mathop{\kern 0.1em\vrule width 0.3em height 0.697ex depth -0.604ex
                  \kern -0.4em \intop}\nolimits_{#1}}%
          {\mathop{\kern 0.1em\vrule width 0.3em height 0.697ex depth -0.604ex
                  \kern -0.4em \intop}\nolimits_{#1}}%
          {\mathop{\kern 0.1em\vrule width 0.3em height 0.697ex depth -0.604ex
                  \kern -0.4em \intop}\nolimits_{#1}}}
\newcommand{\aveint}[2]{\mathchoice%
          {\mathop{\kern 0.2em\vrule width 0.6em height 0.69678ex
depth -0.58065ex
                  \kern -0.8em \intop}\nolimits_{\kern -0.45em#1}^{#2}}%
          {\mathop{\kern 0.1em\vrule width 0.5em height 0.69678ex
depth -0.60387ex
                  \kern -0.6em \intop}\nolimits_{#1}^{#2}}%
          {\mathop{\kern 0.1em\vrule width 0.5em height 0.69678ex
depth -0.60387ex
                  \kern -0.6em \intop}\nolimits_{#1}^{#2}}%
          {\mathop{\kern 0.1em\vrule width 0.5em height 0.69678ex
depth -0.60387ex
                  \kern -0.6em \intop}\nolimits_{#1}^{#2}}}
\numberwithin{equation}{section}
\begin{document}
\maketitle
\begin{abstract} We consider a regular and bounded dielectric body moving at a speed $\vert V\vert$, following a constant vector field $V$, with respect to a reference frame. In this frame, the special relativity implies that the Maxwell system is derived through the constitutive equations linking the moving speed $\vert V \vert$ and the speed of light $c$ in the background medium (as the vacuum for instance). Based on this model, we derive the well-poseness of the related forward scattering problem in the natural regime where $\frac{\vert V\vert}{c}\leq C_{te}$ with an appropriate constant $C_{te} <1$ that we estimate. In particular, we show the invertibility of the related Lippmann-Schwinger system in this regime and state the corresponding Born series in terms of the ratio $\frac{\vert V\vert}{c}$. As an application, we state and show the unique identifiability of the inverse problem of detecting the dielectric body, without knowing the moving speed $\vert V\vert$ or $V$, by illuminating it with incident electromagnetic waves propagating at the speed $c$. Such identifiability result makes sense in the regime under consideration.   
\end{abstract}

	\medskip
		
		\noindent{\bf Keywords.}  Moving dielectric media, Electromagnetic waves, Lippmann-Schwinger equation, Inverse problems. 
		
		\noindent{\bf Mathematics Subject Classification (2020)}: Primary 35R30; secondary 35Q61

%\tableofcontents
\section{Introduction}
\subsection{The Maxwell system modeling the dielectric body moving at constant speed}
To state the mathematical system modeling
the electromagnetic waves generated by a moving dielectric, we first recall the electromagnetic fields defined via the four vectors $E,B,H,D,$ describing respectively the electric field, the magnetic induction, the magnetic field, and the electric displacement, satisfy the system of Maxwell’s equations
\begin{align}\label{main_eqn*}
\left\{
\begin{array}{ll}
	\curl\; E = -B_t, & \div\; B = 0 \quad \text{ in }  \mathbb{R}^3,\\
		\curl\; H = D_t, & \div\; D = 0  \quad \text{ in }  \mathbb{R}^3.
\end{array}
\right.
\end{align}
 Let $\Omega\subset \mathbb{R}^3$ be a dielectric material represented by a bounded and smooth domain having a permittivity $\varepsilon$ and permeability $\mu$. We consider $\Omega$ moving with a velocity vector $V$ with respect to a fixed laboratory reference frame. In this laboratory frame $x, y, z, t,$ the classical Maxwell’s relations are represented by the formulae
\begin{align}\label{relation*}
\left\{
\begin{array}{ll}
	D + \frac{1}{c^2}V\times H = \varepsilon (E + V \times B), & \text{ in } \Omega\\
\\
	B - \frac{1}{c^2}V\times E = \mu (H - V \times D) & \text{ in } \Omega,
\end{array}
\right.
\end{align}
see \cite{Lurie, Lakhtakia-1}, where in the exterior domain of $\Omega$, the vectors $B$ and $D$ satisfy
\begin{align}\label{relation}
\left\{
\begin{array}{ll}
D= \varepsilon_{0} E , & \text{ in } \mathbb{R}^3\setminus\Omega \\
\\
	B  = \mu_{0} H  & \text{ in } \mathbb{R}^3\setminus\Omega.
\end{array}
\right.
\end{align}
Along the boundary $\partial\Omega$, 
%\begin{align}\label{relation}
%\left\{
%\begin{array}{ll}
		%\frac{1}{c^2}V\times H - \epsilon (E + V \times %B) = \epsilon_{\infty} E , & \text{ on } \partial\Omega \\
%\\
%	\frac{1}{c^2}V\times E + \mu (H - V \times D) = \mu_{\infty} H  & \text{ on } \partial\Omega
%\end{array}
%\right.
%\end{align}
$D$ and $B$ should be continuous. Here we assume
the dielectric permittivity and the magnetic permeability of a material to be scalar coefficients. For ordinary dielectrics, these coefficients are positive with $\epsilon\geq \epsilon_0$ and $\mu \geq \mu_0 $. We shall assume in what follows that they are also frequency independent, i.e., the material has no dispersion.
Differentiating the equation \eqref{relation*} in times, we obtain
\begin{align}\label{relation1}
\left\{
\begin{array}{ll}
	D_t + \frac{1}{c^2}V\chi_{\Omega}\times H_t = \varepsilon (E_t + V\chi_{\Omega} \times B_t), & \text{ in } \mathbb{R}^3 \\
\\
	B_t - \frac{1}{c^2}V\chi_{\Omega}\times E_t = \mu (H_t - V \chi_{\Omega} \times D_t) & \text{ in } \mathbb{R}^3,
\end{array}
\right.
\end{align}
where $\chi_{\Omega}$ is the characteristic function of $\Omega$. Note that if $V = (v_1,v_2,v_3)\in\mathbb{R}^3$ then the vector product $V\times$ represents a matrix
$$
V\times=\left(\begin{array}{ccc}
0 & -v_3 & v_2 \\
v_3 & 0 & -v_1 \\
-v_2 & v_1 & 0
\end{array}\right)
$$
for $x\in \Omega$ and $V\chi_{\Omega}\times=0_{3\times 3}$ for $x\in\mathbb{R}^3\setminus \overline{\Omega}.$
We write the above relations in the form
\begin{equation}
 \begin{pmatrix}
1 & \mu V\chi_{\Omega}\times \\ \\
-\varepsilon V\chi_{\Omega}\times & 1 
\end{pmatrix}
\begin{pmatrix}
B_t \\ \\
D_t 
\end{pmatrix}
=
\begin{pmatrix}
\frac{1}{c^2}V\chi_{\Omega}\times & \mu \\ \\
\varepsilon & -\frac{1}{c^2}V\chi_{\Omega}\times 
\end{pmatrix}
\begin{pmatrix}
E_t\\ \\
H_t
\end{pmatrix},
\end{equation}
where $$\begin{pmatrix}
1 & \mu V\chi_{\Omega}\times \\ \\
-\varepsilon V\chi_{\Omega}\times & 1 
\end{pmatrix}
$$
means the $6\times 6$ matrix
\[
\begin{pmatrix}
I_{3\times 3} & \mu V\chi_{\Omega}\times \\ \\
-\varepsilon V\chi_{\Omega}\times & I_{3\times 3}
\end{pmatrix}.
\]
Similarly, the matrix
\[
\begin{pmatrix}
\frac{1}{c^2}V\chi_{\Omega}\times & \mu \\ \\
\varepsilon & -\frac{1}{c^2}V\chi_{\Omega}\times 
\end{pmatrix}
\]
is understood as a $6\times 6$ matrix of the form
\[
\begin{pmatrix}
\frac{1}{c^2}V\chi_{\Omega}\times & \mu I_{3\times 3} \\ \\
\varepsilon I_{3\times 3} & -\frac{1}{c^2}V\chi_{\Omega}\times 
\end{pmatrix}.
\]
Replacing $B_t$ and $D_t$ in the system \eqref{main_eqn*}, we get
\begin{equation}\label{eq_matrix}
 \begin{pmatrix}
1 & \mu V\chi_{\Omega}\times \\ \\
-\varepsilon V\chi_{\Omega}\times & 1 
\end{pmatrix}
\begin{pmatrix}
-\curl\; E  \\ \\
\curl\; H
\end{pmatrix}
=
\begin{pmatrix}
\frac{1}{c^2}V\chi_{\Omega}\times & \mu \\ \\
\varepsilon & -\frac{1}{c^2}V\chi_{\Omega}\times 
\end{pmatrix}
\begin{pmatrix}
E_t\\ \\
H_t
\end{pmatrix}.
\end{equation}
This is our starting Maxwell system modeling the electromagnetic waves generated by the dielectric body $\Omega$ moving through a constant vector field $V$, at a speed $\vert V\vert$. 
\subsection{The time-harmonic regime}
We are interested in the time harmonic wave propagation in the presence of the moving media $\Omega$. We seek for solutions of the form $$E:=\real(E(\cdot, \omega)e^{-i\omega t})$$ and $$H:=\real(H(\cdot, \omega)e^{-i\omega t}).$$
%Define the Fourier transform in time:
%\[
%E(x,\omega) = (\mathcal{F}_t\mathcal{E})(x;\omega) = \int_{\mathbb{R}}\mathcal{E}(x,t) e^{i\omega t} dt,
%\]
 %and
 %\[
%H(x,\omega) = (\mathcal{F}_t\mathcal{H})(x;\omega) = \int_{\mathbb{R}}\mathcal{H}(x,t) e^{i\omega t} dt.
%\]
%Use 
%\[
%\mathcal{F}_t(\frac{\partial u}{\partial t}) = -i\omega\mathcal{F}_t u
%\]
Therefore $E(\cdot,\omega)$ and $H(\cdot,\omega)$ satisfy
\begin{equation}\label{eq_matrix2}
 \begin{pmatrix}
1 & \mu V\chi_{\Omega}\times \\ \\
-\varepsilon V\chi_{\Omega}\times & 1 
\end{pmatrix}
\begin{pmatrix}
-\curl\; E(x,\omega) \\ \\
\curl\; H(x,\omega)
\end{pmatrix}
=
\begin{pmatrix}
\frac{1}{c^2}V\chi_{\Omega}\times & \mu \\ \\
\varepsilon & -\frac{1}{c^2}V\chi_{\Omega}\times 
\end{pmatrix}
\begin{pmatrix}
-i\omega E(x,\omega)\\ \\
-i\omega H(x,\omega)
\end{pmatrix}.
\end{equation}
If the matrix 
\begin{equation}\label{maTRiX}
 \begin{pmatrix}
1 & \mu V\chi_{\Omega}\times \\ \\
-\varepsilon V\chi_{\Omega}\times & 1 
\end{pmatrix}
\end{equation}
is invertible, then the above equation formally writes as follows
\begin{equation}\label{eq_matrix3}
\begin{pmatrix}
\curl\; E(x,\omega) \\ \\
\curl\; H(x,\omega)
\end{pmatrix}
=
 \begin{pmatrix}
-1 & \mu V\chi_{\Omega}\times \\ \\
\varepsilon V\chi_{\Omega}\times & 1 
\end{pmatrix}^{-1}
\begin{pmatrix}
\frac{1}{c^2}V\chi_{\Omega}\times & \mu \\ \\
\varepsilon & -\frac{1}{c^2}V\chi_{\Omega}\times 
\end{pmatrix}
\begin{pmatrix}
-i\omega E(x,\omega)\\ \\
-i\omega H(x,\omega)
\end{pmatrix}.
\end{equation}
Simplifying further we get, 
%\footnote{So far we have not used the equations $\div\; B = 0$ and $\div\; D = 0 $ to derived the Time harmonic Maxwell system under moving dielectric.}
\begin{equation}\label{eq_matrix4}
\begin{pmatrix}
\curl\; E(x,\omega) \\ \\
\curl\; H(x,\omega)
\end{pmatrix}
=
-i\omega
\tilde{A}\begin{pmatrix}
 E(x,\omega)\\ \\
 H(x,\omega)
\end{pmatrix},
\end{equation}
where
\begin{equation}\label{Coefficient_matrix}
\tilde{A} := \begin{pmatrix}
-1 & \mu V\chi_{\Omega}\times \\ \\
\varepsilon V\chi_{\Omega}\times & 1 
\end{pmatrix}^{-1}
\begin{pmatrix}
\frac{1}{c^2}V\chi_{\Omega}\times & \mu \\ \\
\varepsilon & -\frac{1}{c^2}V\chi_{\Omega}\times 
\end{pmatrix}.
\end{equation}
Under some assumptions on $V$, the matrix \eqref{maTRiX} is invertible.
One of our main contributions in this article is to understand the transmission problem corresponding to equations \eqref{eq_matrix4} and \eqref{Coefficient_matrix}. 
From \eqref{eq_matrix4} and \eqref{Coefficient_matrix}, we see that $E$ and $H$ can be in an energy space $W^{1,p}_{loc}(\curl,\mathbb{R}^3)$, only if $E\wedge\nu$ and $H\wedge\nu$ are continuous across $\partial\Omega.$ Here, we distinguish three speeds of propagation: $c$ which is the speed of light in the vacuum, $c_{\Omega}:=\sqrt{\varepsilon\mu}$ which is the speed of light in $\Omega$ and finally, $c_{V} := |V|$ which is the moving speed of the dielectric material $\Omega.$ According to the special relativity assumptions, we know that $c_{\Omega}<c$ and $|V|<c$. In what follows, we will see that we would need $|V| \ll c_\Omega$ in some sense. Therefore, under such condition, sending a plane wave at a frequency $\omega$, with light speed $c$, would reach the dielectric material $\Omega$ and, in turn, a part of it will be scattered and a part will be absorbed. In addition, this incident plane wave, which is an incoming wave generated by a point-source with a source part 'sent' at infinity, generates an outgoing wave. Hence, we look for solutions of \eqref{eq_matrix4} and \eqref{Coefficient_matrix} of the form $E:=E^s+E^i$ and $H:=H^s+H^i$, where $(E^i, H^i)$ is the incident electromagnetic fields solution of the Maxwell system in the absence of $\Omega$, and $(E^s, H^s)$  satisfies the outgoing radiation conditions in the vacuum given by the usual Silver-M\"uller radiation condition
\begin{equation}\label{Siver_Muller}
    \left\{\begin{array}{l}\frac{x}{|x|} \times H^s+\sqrt{\frac{\varepsilon_0}{\mu_0}} E^s=O\left(\frac{1}{|x|^2}\right) \text { as }|x| \rightarrow \infty, \\ \frac{x}{|x|} \times E^s -\sqrt{\frac{\mu_0}{\varepsilon_0}} H^s=O\left(\frac{1}{|x|^2}\right) \text { as }|x| \rightarrow \infty .\end{array}\right.
\end{equation}
\subsection{Transmission problems}
Let $\Omega$ be a bounded $C^1$-smooth domain in $\mathbb{R}^3$, and $\nu$ be the outward unit normal to the boundary $\partial\Omega$. Assume that $\partial \Omega$ is connected. In this article, we are interested in the solutions properties of the transmission boundary value problem for the Maxwell system. We aim at finding the solutions $\left(E, H\right)$ and $\left(E^{s}, H^{s}\right)$ for the Maxwell system under the moving media. In principle, the vector field $\left(E, H\right)$ satisfies equation \eqref{eq_matrix4} inside the domain $\Omega$ and the other pair of vector field $\left(E^{s}, H^{s}\right)$ satisfies \eqref{eq_matrix4} outside the domain $\Omega$ where the velocity $V$ is zero. In the exterior domain, $\left(E^{s}, H^{s}\right)$ satisfies the classical Maxwell system  along with the Silver-M\"uller radiation condition. The transmission boundary value problem
for the Maxwell system under the moving medium reads as
\begin{equation}\label{model_EQ}
\left\{\begin{array}{l}
\curl\; \begin{pmatrix}
 E \\ \\
 H
\end{pmatrix}
=
-i\omega
\tilde{A}^{+}\begin{pmatrix}
E\\ \\
 H
\end{pmatrix}, \text { in } \Omega \\ \\
\curl\; \begin{pmatrix}
 E^{s} \\ \\
 H^{s}
\end{pmatrix}
=
-i\omega
\tilde{A}^{-}\begin{pmatrix}
E^{s}\\ \\
 H^{s}
\end{pmatrix}, \text { in } \mathbb{R}^3 \backslash \overline{\Omega}\\ \\
\nu \times\left. E^{s}\right|_{\partial \Omega}-\nu \times\left. E\right|_{\partial \Omega}=-\nu \times E^{i} \\
\nu \times\left. H^{s}\right|_{\partial \Omega}-\nu \times\left. H\right|_{\partial \Omega}=-\nu \times H^{i} \\
E^{s}, H^{s} \ \text{ satisfy the Silver-M\"uller radiation conditions \eqref{Siver_Muller} at infinity, }
\end{array}\right. 
\end{equation}
where 
\[
\tilde{A}^{+} := \begin{pmatrix}
-1 & \mu V\chi_{\Omega}\times \\ \\
\varepsilon V\chi_{\Omega}\times & 1 
\end{pmatrix}^{-1}
\begin{pmatrix}
\frac{1}{c^2}V\chi_{\Omega}\times & \mu \\ \\
\varepsilon & -\frac{1}{c^2}V\chi_{\Omega}\times 
\end{pmatrix}
\]
and
\[
\tilde{A}^{-} := \begin{pmatrix}
-1 & 0 \\ \\
0 & 1 
\end{pmatrix}^{-1}
\begin{pmatrix}
0 & \mu_0 \\ \\
\varepsilon_0 & 0 
\end{pmatrix}=\begin{pmatrix}
0 & -\mu_0 \\ \\
\varepsilon_0 & 0 
\end{pmatrix}.
\]
%Note that $\varepsilon$ and $\mu$ are the electric permittivity and magnetic permeability of the domain $\Omega$ respectively, $\varepsilon_0$ and $\mu_0$ are the electric permittivity and magnetic permeability in the vacuum respectively. 

\begin{assumption}\label{assum} We need the following assumptions on $V, c, c_{\Omega}, \varepsilon,\mu$ and  $\omega.$
\bigskip  

    \begin{enumerate}
        \item The moving speed $\vert V \vert$ is estimated in terms of the light speeds $c$ and $c_\Omega$ as
        \begin{equation}\label{regimes-1}\max\{~~\mu_0 \frac{\epsilon}{\epsilon_0},~~ \epsilon_0 \frac{\mu}{\mu_0}~~\}\vert V\vert<1.
        \end{equation}
\item The contrast of the dielectric's material properties satisfy the condition
\begin{equation}\label{regimes-2}
C(p, \omega) (1-\frac{c_\Omega}{c})\max \{~~ \frac{\epsilon}{\epsilon_0}, ~~ \frac{\mu}{\mu_0}\}~ \max\{1, \omega \epsilon_0, \omega \mu_0, \omega(\epsilon-\epsilon_0), \omega(\mu-\mu_0), \omega^2\epsilon_0 \mu_0 \max\{\frac{\epsilon}{\epsilon_0}-1, \frac{\mu}{\mu_0}-1\}\}<\frac{1}{2}
\end{equation}
where $C(p, \omega)$ is a universal constant depending only on the used frequency and the power $p$ of the used energy spaces $L^p$.
    \end{enumerate}
\end{assumption}
%\begin{remark}
%When $p=2$, the constant $C(p)$ should be $1$. In that case we could get better estimates.
%\end{remark}
The condition (1) and the positivity of the term $(1-\frac{c_\Omega}{c})$ appearing in (2) imply, in particular, that 
\begin{equation}\label{V-speeds}
\vert V\vert <c_\Omega <c.
\end{equation}
The condition (2) indicates how large we can allow the contrast of the dielectric material as compared to the background medium. This condition is naturally appearing in the estimation of the convergence of the Born-series representation in the case where the  dielectric is at a rest. 
\bigskip

Our main result for the direct problem is as follows.
\begin{theorem}\label{Main_thm_direct}
    Let $\Omega$ be an arbitrary bounded $C^1$-smooth domain in $\mathbb{R}^3$. Then the problem \eqref{model_EQ} is uniquely solvable in $W^{1,p}_{loc}(\curl,\mathbb{R}^3)$ for each $1 < p < \infty$. In addition, there exists a constant $C=C\left(\partial \Omega, \mu, \varepsilon, \mu_{0}, \varepsilon_{0}, \omega\right)>0$, such that the solution satisfies the estimates
\[
\left\|E\right\|_{L^{p}( \Omega)}+\left\|H\right\|_{L^{p}(\Omega)} \leq C\left[\|\nu \times E^{i}\|_{L_{\tan }^{p, \operatorname{Div}}(\partial \Omega)}+\|\nu \times H^{i}\|_{L_{\tan }^{p, \operatorname{Div}}(\partial \Omega)}\right]
\]
and
\[
\left\|E^{s}\right\|_{L_{loc}^{p}( \mathbb{R}^3 \backslash \overline{\Omega})}+\left\|H^{s}\right\|_{L_{loc}^{p}(\mathbb{R}^3 \backslash \overline{\Omega})} \preceq \left[\|\nu \times E^{i}\|_{L_{\tan }^{p, \operatorname{Div}}(\partial \Omega)}+\|\nu \times H^{i}\|_{L_{\tan }^{p, \operatorname{Div}}(\partial \Omega)}\right].
\]
\end{theorem}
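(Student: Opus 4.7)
The plan is to convert the transmission problem \eqref{model_EQ} into a Lippmann--Schwinger volume integral equation on $\Omega$ and then invert the associated integral operator in $L^p(\Omega)$ by a Neumann (Born) series argument, using Assumption \ref{assum} to make its norm strictly smaller than one. First I would write the total fields $(E,H)$ in $\mathbb R^{3}$ (with $(E^{s},H^{s})=(E-E^{i},H-H^{i})$ outside $\Omega$) and rearrange \eqref{eq_matrix4} as
\begin{equation}
\operatorname{curl}\begin{pmatrix} E\\ H\end{pmatrix}+i\omega \tilde A^{-}\begin{pmatrix} E\\ H\end{pmatrix}=-i\omega(\tilde A^{+}-\tilde A^{-})\chi_{\Omega}\begin{pmatrix} E\\ H\end{pmatrix},
\end{equation}
so the forcing is compactly supported in $\Omega$. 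Convolving with the dyadic Green function of the vacuum Maxwell system (and using that $(E^{i},H^{i})$ is the free field), the transmission problem together with the Silver--Müller condition \eqref{Siver_Muller} becomes
\begin{equation}
(I-\mathcal T)\begin{pmatrix} E\\ H\end{pmatrix}=\begin{pmatrix} E^{i}\\ H^{i}\end{pmatrix}\qquad\text{on }\Omega,
\end{equation}
where $\mathcal T$ is the Maxwell volume potential applied to the contrast $(\tilde A^{+}-\tilde A^{-})\chi_{\Omega}$.

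Before invoking the Born series I have to make sense of $\tilde A^{+}$ itself. Assumption \ref{assum}(1) is precisely what I would use to show that the $6\times 6$ matrix \eqref{maTRiX} is pointwise invertible; a direct block computation (inverting $I\pm \mu\varepsilon (V\times)^{2}$ and using $|(V\times)^{2}|\le |V|^{2}$) gives an explicit formula for $\tilde A^{+}$ and an upper bound
\begin{equation}
|\tilde A^{+}-\tilde A^{-}|\lesssim \bigl|\varepsilon-\varepsilon_{0}\bigr|+\bigl|\mu-\mu_{0}\bigr|+C(\varepsilon,\mu)\,\bigl(1-\tfrac{c_{\Omega}}{c}\bigr)+\mathcal O(|V|),
\end{equation}
where the factor $(1-c_{\Omega}/c)$ encodes the coupling between the motion and the mismatch of the two light speeds. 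This is the estimate that couples the contrast hypothesis in Assumption \ref{assum}(2) to the integral operator norm.

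Next I would establish $L^{p}\to L^{p}$ bounds for $\mathcal T$ on $\Omega$. The operator splits into (i) a weakly singular volume potential (Newtonian-type) which is bounded on every $L^{p}(\Omega)$, $1<p<\infty$, with constant depending on $\omega$ and $\operatorname{diam}\Omega$, and (ii) a Calderón--Zygmund principal value piece coming from the $\nabla\operatorname{div}$ part of the dyadic Green kernel, which is also $L^{p}$-bounded by classical singular integral theory. Combining these bounds with the pointwise estimate on $\tilde A^{+}-\tilde A^{-}$ yields
\begin{equation}
\|\mathcal T\|_{L^{p}(\Omega)\to L^{p}(\Omega)}\le C(p,\omega)\Bigl(1-\tfrac{c_{\Omega}}{c}\Bigr)\max\!\Bigl\{\tfrac{\varepsilon}{\varepsilon_{0}},\tfrac{\mu}{\mu_{0}}\Bigr\}\cdot M(\omega,\varepsilon,\mu,\varepsilon_{0},\mu_{0}),
\end{equation}
which by Assumption \ref{assum}(2) is bounded by $\tfrac{1}{2}$. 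Hence $I-\mathcal T$ is invertible on $L^{p}(\Omega)$ by a Neumann series (this is the Born series announced in the abstract), giving existence, uniqueness and the interior estimate
\begin{equation}
\|E\|_{L^{p}(\Omega)}+\|H\|_{L^{p}(\Omega)}\le 2\bigl(\|E^{i}\|_{L^{p}(\Omega)}+\|H^{i}\|_{L^{p}(\Omega)}\bigr).
\end{equation}
To replace $\|E^{i}\|_{L^{p}(\Omega)}+\|H^{i}\|_{L^{p}(\Omega)}$ by the boundary trace norms in the theorem, I would use standard interior estimates for Maxwell solutions in $\Omega$: $(E^{i},H^{i})$ solves a free Maxwell system there and so is controlled by its tangential traces in $L^{p,\operatorname{Div}}_{\tan}(\partial\Omega)$.

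Finally, the exterior bound comes for free by plugging the interior solution back into the volume potential representation: $(E^{s},H^{s})=\mathcal T\bigl(\chi_{\Omega}(E,H)\bigr)$ is $W^{1,p}_{loc}$ and decays at infinity with the Silver--Müller rate because it is a radiating Maxwell potential. The $L^{p}_{loc}(\mathbb R^{3}\setminus\overline\Omega)$ estimate then follows from the $L^{p}(\Omega)$ bound on $(E,H)$ and the continuity of the volume potential from $L^{p}(\Omega)$ to $L^{p}_{loc}(\mathbb R^{3}\setminus\overline\Omega)$. The main obstacle, in my view, is the second step: producing a sharp enough pointwise bound on $\tilde A^{+}-\tilde A^{-}$ that isolates the factor $(1-c_{\Omega}/c)$ and matches Assumption \ref{assum}(2), since a naive estimate only gives something of order $\max\{\varepsilon/\varepsilon_{0},\mu/\mu_{0}\}$ and would fail to close the Born-series argument in the regime actually claimed.
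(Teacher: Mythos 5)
Your overall strategy coincides with the paper's: reduce \eqref{model_EQ} to a Lippmann--Schwinger volume equation (this is Proposition \ref{Lipp_s1}), bound the Maxwell volume potential on $L^p(\Omega)$ via Calder\'on--Zygmund estimates for the Newtonian potential of $\Phi_{k_0}$, invert by a Neumann series under Assumption \ref{assum}, and finally pass from $\|E^{i}\|_{L^p(\Omega)}+\|H^{i}\|_{L^p(\Omega)}$ to the tangential trace norms using the boundary estimates for free Maxwell fields. However, the step you yourself flag as the main obstacle --- producing a pointwise bound on the contrast that isolates the factor $(1-\frac{c_\Omega}{c})$ --- is exactly the substantive content of the paper's argument, and your proposal leaves it unresolved; that is a genuine gap rather than a technicality. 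The paper does not estimate $\tilde A^{+}-\tilde A^{-}$ directly. It first rescales the two equations by $i/\mu_0$ and $i/\varepsilon_0$, rewrites the system as $(\mathcal{L}_0-\omega)(E,H)^{T}=\omega\mathcal{M}(E,H)^{T}$ with $\mathcal{M}$ split into the rest contrast $\operatorname{diag}(\frac{\varepsilon}{\varepsilon_0}-1,\frac{\mu}{\mu_0}-1)$ plus a motion part $(C+BC+BD)\operatorname{diag}(\frac{\varepsilon}{\varepsilon_0},\frac{\mu}{\mu_0})$, and then uses the algebraic identity $C+BC+BD=(I-T)^{-1}(C+T)$ together with the explicit computation showing that $C+T$ has off-diagonal entries proportional to $(\frac{1}{c_{\Omega}^{2}}-\frac{1}{c^{2}})\,V\times$. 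This factorization is what makes the motion-induced part small; Assumption \ref{assum}(1) only enters to guarantee $\|T\|_{op}<1$ so that $(I-T)^{-1}$ is a bounded perturbation.

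A related inaccuracy in your outline: the proposed bound $\|\mathcal T\|\le C(p,\omega)\bigl(1-\frac{c_\Omega}{c}\bigr)\max\{\frac{\varepsilon}{\varepsilon_0},\frac{\mu}{\mu_0}\}\,M$ cannot hold for the lumped contrast operator, since the rest part $\operatorname{diag}(\frac{\varepsilon}{\varepsilon_0}-1,\frac{\mu}{\mu_0}-1)$ survives at $V=0$ and carries no such factor. The paper accordingly decomposes $G_{\mathcal M}=G_{-1}+\sum_{j\ge 0}G_j$, first inverts $K=I-G_{-1}$ (where only the smallness of the pure dielectric contrast is used), and then applies a second Neumann series to $I-K^{-1}\sum_{j\ge 0}G_j$, each $G_j$ inheriting the factor $\|T\|_{op}^{\,j}\,\|C+T\|_{op}$ and hence the $(\frac{1}{c_\Omega^2}-\frac{1}{c^2})|V|$ smallness. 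If you want a single Neumann series you must still estimate these two pieces separately, which brings you back to the paper's decomposition. The remaining steps you describe --- the $L^p$ mapping properties of $L_{k_0}$, the trace estimate for $(E^{i},H^{i})$, and the exterior bound obtained by re-inserting the interior solution into the representation formula --- do match the paper's proof.
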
 
Here, for $1<p<\infty$, we consider the function space as
$$L_{\tan }^{p, \operatorname{Div}}(\partial \Omega):=\left\{f \in L_{\tan }^p(\partial \Omega) ; \operatorname{Div} f \in L^p(\partial \Omega)\right\},$$ see more details in the Section \ref{sec2}. 

In order to prove this theorem, we will first prove the following two propositions. In the first proposition, we deduce the Lippmann-Schwinger system of equations related to the problem \eqref{model_EQ}.
\begin{proposition}\label{Lipp_s1}
    The problem \eqref{model_EQ} is solvable in $W^{1,p}_{loc}(\curl,\mathbb{R}^3)$ for each $1 < p < \infty$ if and only if the Lippmann-Schwinger equation for the Maxwell system 
\begin{equation}\label{Lipp_s}
   \begin{pmatrix}
 E-E^{i}\\ \\
 H-H^{i}
\end{pmatrix}(x)
=
\omega
\int_{\Omega}
G(x-y)
\mathcal{M} \begin{pmatrix}
 E(y)\\ \\
 H(y)
\end{pmatrix} dy, \ \ \text{for}\ x\in\Omega,
\end{equation}
is invertible, where $E^{i}, H^{i}$ are incident fields. The matrix $G$ is the fundamental matrix for the Maxwell system which is defined in Section \ref{sec5} and $\mathcal{M}$ has the following form
\[
\mathcal{M} := \begin{pmatrix}
\frac{\varepsilon}{\varepsilon_{0}}-1 & 0 \\ \\
0 & \frac{\mu}{\mu_{0}}-1
\end{pmatrix}
+(C+BC+BD)
\begin{pmatrix}
\frac{\varepsilon}{\varepsilon_{0}}  & 0 \\ \\
0 & \frac{\mu}{\mu_{0}} 
\end{pmatrix}
\]
where 
\[
C := \begin{pmatrix}
0 & -\frac{1}{c^2}\frac{1}{\varepsilon_{0}}\frac{\mu_{0}}{\mu}V\times \\ \\
\frac{1}{c^2}\frac{1}{\mu_{0}}\frac{\epsilon_{0}}{\varepsilon}V\times & 0 
\end{pmatrix}
\]
and 
\[
D :=
\begin{pmatrix}
1  & 0 \\ \\
0 & 1
\end{pmatrix}.
\]
The matrix $B$ is defined as $B=T + T^2 + T^3 + \cdots$ with 
\[
T := \begin{pmatrix}
0 & \mu_{0}\frac{\varepsilon}{\varepsilon_{0}} V\times \\ \\
-\varepsilon_{0} \frac{\mu}{\mu_{0}}V\times & 0 
\end{pmatrix}.
\]
\end{proposition}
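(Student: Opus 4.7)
The plan is to recast the interior equation in \eqref{model_EQ} as the free-space (vacuum) Maxwell system plus a source term supported on $\Omega$, and then invert the free-space operator using its fundamental matrix $G$ together with the Silver--M\"uller radiation condition. Writing $\mathbf{F}=(E,H)^T$, $\mathbf{F}^i=(E^i,H^i)^T$, $\mathbf{F}^s=(E^s,H^s)^T$, I will identify the source term with $-i\omega\,\mathcal{M}\mathbf{F}\,\chi_\Omega$ for the $\mathcal{M}$ written in the statement, so that the standard volume-potential representation for the exterior/interior Maxwell problem yields the Lippmann--Schwinger equation \eqref{Lipp_s}, and conversely any $L^p$-solution of \eqref{Lipp_s} extends via the volume potential to a solution of the full transmission problem.

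The key steps, carried out in order, are the following. First, I invert the coupling matrix \eqref{maTRiX} by a Neumann series: rewriting
\[
M_1:=\begin{pmatrix} I & \mu V\chi_\Omega\times \\ -\varepsilon V\chi_\Omega\times & I\end{pmatrix}
\]
after a diagonal rescaling that matches the normalization used in the statement, I obtain $M_1^{-1}=I+B$ with $B=T+T^2+T^3+\cdots$ for the $T$ defined in the proposition; Assumption \ref{assum}(1) provides the operator-norm bound $\|T\|<1$ that makes the series converge on $L^p(\Omega)^6$. Second, inserting $M_1^{-1}$ into \eqref{eq_matrix4} and comparing $\tilde A^+$ with $\tilde A^-$, a direct algebraic computation shows that the interior equation of \eqref{model_EQ} can be rewritten as
\[
\curl\,\mathbf{F}+i\omega\,\tilde A^-\mathbf{F}=-i\omega\,\mathcal{M}\,\mathbf{F}\,\chi_\Omega,
\]
where the diagonal part $\mathrm{diag}(\varepsilon/\varepsilon_0-1,\mu/\mu_0-1)$ of $\mathcal{M}$ emerges from the zero-velocity contrast of the constitutive relations and the remaining term $(C+BC+BD)\,\mathrm{diag}(\varepsilon/\varepsilon_0,\mu/\mu_0)$ collects the $V\times$ contributions that come from the off-diagonal $\tfrac{1}{c^2}V\times$ blocks of the right-hand matrix in \eqref{eq_matrix3} after the Neumann expansion. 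Third, since $\mathbf{F}^s=\mathbf{F}-\mathbf{F}^i$ outside $\Omega$ and the jumps $[\nu\times E]$, $[\nu\times H]$ on $\partial\Omega$ are encoded in the third--fourth lines of \eqref{model_EQ}, the Stratton--Chu type integral representation via the fundamental matrix $G$ for the vacuum system (defined in Section \ref{sec5}) yields
\[
\mathbf{F}^s(x)=\omega\int_\Omega G(x-y)\,\mathcal{M}\,\mathbf{F}(y)\,dy,\qquad x\in\mathbb{R}^3,
\]
and restricting to $x\in\Omega$ gives \eqref{Lipp_s}. The converse implication follows by defining $\mathbf{F}^s$ through the same volume potential: the $W^{1,p}_{loc}(\curl,\mathbb{R}^3)$--mapping properties of the Maxwell volume potential automatically give the correct transmission conditions on $\partial\Omega$, the exterior vacuum equation and the Silver--M\"uller radiation behaviour, while the defining Lippmann--Schwinger identity recovers the interior equation after multiplying by $M_1$ (which is invertible by Step~1).

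The main obstacle is the explicit algebraic identification of $\mathcal{M}$ in the second step, in particular the collected motion term $(C+BC+BD)\,\mathrm{diag}(\varepsilon/\varepsilon_0,\mu/\mu_0)$: one must carefully track the interaction between the noncommuting block $V\times$ (which is a $3\times 3$ skew matrix, not a scalar) and the scalar contrasts $\varepsilon/\varepsilon_0$, $\mu/\mu_0$ when expanding $M_1^{-1}$ and when separating the $V$--independent diagonal contrast from the motion--induced remainder; the decomposition into $C$, the correction $BC$ coming from $B\cdot C$, and the additional $BD$ term coming from the diagonal $\mathrm{diag}(\mu,\varepsilon)$ block of the right factor, is the concrete output of this bookkeeping. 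The other technical ingredient, namely the boundedness and $C^1$--smoothness based mapping properties of the Maxwell volume potential needed to justify the converse direction in $W^{1,p}_{loc}(\curl,\mathbb{R}^3)$, is standard and is used only in a black-box fashion here.
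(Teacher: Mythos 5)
Your proposal is correct and follows essentially the same route as the paper: Neumann-series inversion of the coupling matrix under Assumption \ref{assum}(1), algebraic rewriting of the interior equation as the vacuum Maxwell operator plus the source $\omega\mathcal{M}(E,H)^T\chi_\Omega$, and then the volume-potential/Stratton--Chu representation with the fundamental matrix $G$ combined with the Silver--M\"uller condition to kill the boundary terms at infinity (the paper packages these last two ingredients as Lemma \ref{representation} and Theorem \ref{radiatioN}). The only organizational difference is that you fold the converse direction into this proof, whereas the paper defers it to the proof of Proposition \ref{well_posed}; the substance is the same.
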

\begin{remark}
    Observe that if $c_{\Omega} = c$, then $C+T=0$ and then, see early Section \ref{sec5} for details, $C+BC+BD=0$, hence 
\[
\mathcal{M} = \begin{pmatrix}
\frac{\varepsilon}{\varepsilon_{0}}-1 & 0 \\ \\
0 & \frac{\mu}{\mu_{0}}-1
\end{pmatrix}.
\]
In this case, in terms of the electromagnetic waves, it is like the object $\Omega$ is not moving, i.e., $V=0.$ This is consistent with the assumptions of the special relativity theory.
Observe that we can have $c_{\Omega} = c$ but $\frac{\varepsilon}{\varepsilon_{0}}\neq 1$ or/and $\frac{\mu}{\mu_{0}}\neq 1$. Of course when $\frac{\varepsilon}{\varepsilon_{0}}= 1$ and $\frac{\mu}{\mu_{0}}= 1$, then $\mathcal{M}=0$.
In this work, we are interested in regimes where $
\vert V \vert <c_{\Omega} <c.$
\end{remark}
\bigskip

    The second proposition deals with the solvability of the Lippmann-Schwinger system of equations.
   \begin{proposition}\label{well_posed} Suppose that the conditions appearing in Assumptions \ref{assum}
hold true, then the system of integral equations \eqref{Lipp_s} is uniquely solvable in $W^{1,p}_{loc}(\curl,\mathbb{R}^3)$ for each $1<p<\infty$.
\end{proposition}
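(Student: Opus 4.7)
The plan is to recast \eqref{Lipp_s} as the fixed-point equation $(I - \mathcal{K})U = U^i$ on $L^p(\Omega)^6$, where $U = (E,H)^\top$, $U^i = (E^i, H^i)^\top$, and
\[
(\mathcal{K} U)(x) := \omega \int_{\Omega} G(x-y)\, \mathcal{M}\, U(y)\, dy,
\]
and to prove that, under Assumption \ref{assum}, $\|\mathcal{K}\|_{L^p(\Omega)^6 \to L^p(\Omega)^6} < 1$. Once this is available, a Neumann series produces the unique $L^p$-solution on $\Omega$; extending by the same volume representation to $\mathbb{R}^3\setminus\overline{\Omega}$ and using standard $L^p$-regularity of the time-harmonic Maxwell volume potential then promotes the solution to $W^{1,p}_{loc}(\curl,\mathbb{R}^3)$.

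First I would control the matrix $\mathcal{M}$. Assumption \ref{assum}(1) is precisely the statement $\|T\| < 1$ in the natural matrix norm, so the Neumann series $B = T + T^2 + \cdots = T(I-T)^{-1}$ converges with $\|B\| \leq \|T\|/(1-\|T\|)$ and $T$ commutes with $B$. Since $D = I$, the identity $B = T + BT$ (coming from $B(I-T) = T$) gives the simplification $C + BC + BD = (I+B)(C + T)$. A direct block computation then shows that both off-diagonal blocks of $C + T$ are proportional to $\varepsilon\mu - \varepsilon_0\mu_0 = c_\Omega^{-2} - c^{-2}$, which factors as $(1 - c_\Omega/c)\cdot (c+c_\Omega)/(c\,c_\Omega^2)$. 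This is the source of the factor $(1 - c_\Omega/c)$ in Assumption (2), and yields
\[
\|\mathcal{M}\| \leq C_0 \max\{\tfrac{\varepsilon}{\varepsilon_0}, \tfrac{\mu}{\mu_0}\}\Bigl[\max\{\tfrac{\varepsilon}{\varepsilon_0} - 1, \tfrac{\mu}{\mu_0} - 1\} + (1 - c_\Omega/c)\,|V|\Bigr]
\]
for an absolute constant $C_0$.

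Next I would establish the $L^p(\Omega)\to L^p(\Omega)$ continuity of the volume operator with kernel $G$. The fundamental Maxwell matrix decomposes into a Calderón--Zygmund principal part (the curl of a Helmholtz kernel, bounded on $L^p$ for each $1<p<\infty$ by classical singular-integral theory) and a weakly-singular Helmholtz remainder; tracking the $\omega$-dependent coefficients in each block produces the multiplicative constant $C(p,\omega)\,\max\{1,\omega\varepsilon_0, \omega\mu_0, \omega(\varepsilon-\varepsilon_0), \omega(\mu-\mu_0), \omega^2\varepsilon_0\mu_0\max\{\tfrac{\varepsilon}{\varepsilon_0}-1,\tfrac{\mu}{\mu_0}-1\}\}$, which is exactly the second factor appearing on the left of Assumption (2). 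Multiplying by the bound on $\|\mathcal{M}\|$ from the previous step and invoking Assumption (2) yields $\|\mathcal{K}\| < 1/2$, so $I - \mathcal{K}$ is invertible on $L^p(\Omega)^6$ by Neumann series and \eqref{Lipp_s} has a unique $L^p$-solution in $\Omega$. Its extension to $\mathbb{R}^3$ by \eqref{Lipp_s} then lies in $W^{1,p}_{loc}(\curl, \mathbb{R}^3)$ because applying $\curl$ to a Maxwell volume potential preserves $L^p_{loc}$-integrability.

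The principal obstacle will be the $L^p$-estimate on the Maxwell volume potential: tracking all frequency- and contrast-dependent constants in the Calderón--Zygmund analysis of the Maxwell Green's dyadic so that the resulting multiplicative constant matches exactly the quantity prescribed by Assumption \ref{assum}(2). By contrast, the algebraic identity $C+BC+BD=(I+B)(C+T)$ and the extraction of the factor $(1-c_\Omega/c)$ are short computations once spotted, and the final Neumann-series argument is then routine.
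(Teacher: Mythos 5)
Your proposal is correct and follows essentially the same route as the paper: the factorization $C+BC+BD=(I+B)(C+T)=(I-T)^{-1}(C+T)$ with the off-diagonal blocks of $C+T$ proportional to $c_{\Omega}^{-2}-c^{-2}$, Calder\'on--Zygmund $L^p$ bounds for the Newtonian potential transferred to the Maxwell volume potential, a Neumann-series inversion under Assumption \ref{assum}, and the extension of the $L^p(\Omega)$ solution by the volume representation to obtain a $W^{1,p}_{loc}(\curl,\mathbb{R}^3)$ solution of the transmission problem. The only organizational difference is that the paper (in Theorem \ref{opera_est}) first inverts $K=I-G_{-1}$ (the rest-dielectric contrast) and then perturbs by the velocity-dependent series $\sum_{j\ge 0}G_j$ via a second Neumann series, whereas you sum everything into a single operator $\mathcal{K}$ and invert $I-\mathcal{K}$ at once; the two smallness conditions are equivalent, and both arguments invoke Assumption \ref{assum}(2) with the same slight looseness, since the factor $(1-c_{\Omega}/c)$ does not actually multiply the diagonal contrast block of $\mathcal{M}$.
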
 

The forward scattering problem for the classical Maxwell system with a dielectric at rest is well studied and we have a huge literature, see \cite{MR2059447, MR1822275, MR0700400, MR1438894, Emilio:Irina:Marius:Qiang:2012, +2001+423+446, MR1317629} for instance. Let us also cite few references for the Maxwell system in the presence of chiral media, being at rest however, as \cite{Ammari96time-harmonicelectromagnetic, MR1616499} which is the closest model to the one we consider here. There are several works dedicated to the wave propagation by moving media \cite{MR0807903, MR0807903, MR0542950, MR0773393}. Regarding moving media, at a constant speed, we cite \cite{Lakhtakia-1, Lakhtakia-2} where the authors extended the Beltrami field concept to moving chiral media and show that such media can support negative phase velocity at certain velocities. It is our next goal to use these Beltrami fields to study the possibility of generating giant electromagnetic fields by sending dielectric media at speeds close to the speed of of light.  In this paper, we study the perturbation created by the moving object at a speed $\vert V \vert$ kept away from the speed of light. We provide this perturbation in terms of Born series that show the cascade of the dominating fields in terms of the ratio $\frac{\vert V\vert}{c}$ which is considered to be relatively small, see Assumption \ref{assum}. Here, we only deal with the time-harmonic regime. It is of our goal to consider the time-domain setting as well. 

\subsection{Inverse problems}
 The problem \eqref{model_EQ} models the scattering of electromagnetic waves by a penetrable bounded obstacle $\Omega$. Here and subsequently, we will denote by $\omega>0$ the frequency of the electromagnetic wave. The strictly positive numbers $\varepsilon, \varepsilon_{0}, \mu, \mu_{0}, k, k_{0}$ are the electric permittivity, the magnetic permeability and the wave numbers for the interior and the exterior of $\Omega$. Also, the wave numbers are defined by $k^2=\omega^2 \varepsilon \mu, k_{0}^2=\omega^2 \varepsilon_{0} \mu_{0}$. Let $d \in \mathbb{S}^2$ be the incident direction and $p \in \mathbb{S}^2$ the polarization direction. We define the linearly polarized incident plane electric wave as
\begin{equation}\label{E_inci}
  \begin{aligned}
E^{i}(x ; d, p): & =\frac{i}{k_{0}} \operatorname{curl} \operatorname{curl}\left[p e^{i k_{0}\langle x, d\rangle}\right] \\
& =i k_{0}(d \times p) \times d e^{i k_{0}\langle x, d\rangle}, \quad x \in \mathbb{R}^3,
\end{aligned}  
\end{equation}
and the corresponding incident plane magnetic wave
\begin{equation}\label{H_inci}
  \quad H^{i}(x ; d, p):=\frac{k_{0}}{\omega \mu_{0}} \operatorname{curl}\left[p e^{i k_{0}\langle x, d\rangle}\right]=\frac{i k_{0}^2}{\omega \mu_{0}} d \times p e^{i k_{0}\langle x, d\rangle}, \quad x \in \mathbb{R}^3. 
\end{equation}
Such incident waves will produce (radiating) scattered fields $E^{s}, H^{s}$ in the exterior of $\Omega$ and transmitted fields $E, H$ inside $\Omega$. The vector fields $E, H$ on one hand and $E^{s}, H^{s}$ on the other hand, verify Maxwell's equations under moving dielectrics with wave numbers $k$ and $k_{0}$ respectively, and the transmission boundary conditions
\[
\nu \times\left(E^{i}+E^{s}\right)=\nu \times E \ \text{and} \  \nu \times\left(H^{i}+H^{s}\right)= \nu \times H \ \text{on} \ \partial \Omega.
\]
%The boundary conditions in \eqref{model_EQ} are obtained with
%\[
%{f}:=-\nu \times E^{\text {in }} \ \text{and} \  {g}:=-\nu %\times H^{\text {in }} \ 
%\text{on} \ \partial \Omega_{\infty}
%\]
% and zero elsewhere on $\partial \Omega$. Once again, we shall work in the context of scatterers with Lipschitz continuous boundaries. In particular, ${f}, {g}$ above are, generally speaking, discontinuous vector fields.
Our main result concerning the inverse obstacle problem for the transmission problem in electromagnetic scattering reads as follows. First recall that scattered electromagnetic waves $(E^{s}, H^{s})$ satisfying the Silver-M\"uller radiation condition have asymptotic expansions at infinity 
\[
E^{s}(x, d)=\frac{e^{i k_{0}|x|}}{|x|}\left[E_{\infty}\left(\theta, d\right)+O\left(\frac{1}{|x|}\right)\right],\ \text{as} \ |x| \rightarrow \infty
\]
and
\[
H^{s}(x, d)=\frac{e^{i k_{0}|x|}}{|x|}\left[H_{\infty}\left(\theta, d\right)+O\left(\frac{1}{|x|}\right)\right],\ \text{as} \ |x| \rightarrow \infty, 
\]
with $\theta:=\frac{x}{|x|}$ is the direction of propagation. We denote by $\left(E_{\infty}, H_{\infty}\right)$ the corresponding far field patterns. Note that  $E_{\infty}$ determines $H_{\infty}$ as well.
\begin{theorem}\label{inverse_sca}
Suppose that $\Omega_1$ and $\Omega_2$ are two conductive scatterers with $C^1$-smooth boundaries. Assume that both $\Omega_1$ and $\Omega_2$ have a connected complement. Let $\mu_{j}, \epsilon_{j}$ and $V_j$, respectively, the magnetic permeability, the electric permittivity and the moving speeds for $\Omega_j, j=1,2$. Also, let $\mu_{0}, \epsilon_{0}$ be, respectively, the common magnetic permeability and electric permittivity for $\mathbb{R}^3 \backslash \Omega_j, j=$ 1,2 .

Assume that, for a fixed frequency $\omega>0$ the two electric far field patterns corresponding to $\Omega_1$ and $\Omega_2$ for the incident plane waves \eqref{E_inci} and \eqref{H_inci} coincide for each incident direction $d \in \mathbb{S}^2$ and propagation direction $\theta \in \mathbb{S}^2$, with one polarization direction $p$. Then $\Omega_1=\Omega_2$.
\end{theorem}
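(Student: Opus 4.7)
My plan is to adapt the classical Isakov--Kirsch--Kress strategy: combine Rellich's lemma with a point-source blow-up argument, transferring information from plane-wave far-field data to point-source data via mixed reciprocity and Herglotz-wave density.

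\emph{Step 1 (Rellich $+$ unique continuation).} Assume for contradiction that $\Omega_1 \neq \Omega_2$. Since both complements are connected, there exists (after possibly swapping indices) a point $x^\ast \in \partial \Omega_1$ with $x^\ast \notin \overline{\Omega_2}$. Let $G$ denote the unbounded connected component of $\mathbb{R}^3 \setminus \overline{\Omega_1 \cup \Omega_2}$. The scattered pairs $(E_j^s, H_j^s)$ solve the vacuum Maxwell system in $\mathbb{R}^3 \setminus \overline{\Omega_j}$ (because $V_j \chi_{\Omega_j} \equiv 0$ there) and satisfy the Silver--M\"uller conditions. Equality of the far fields $E_{1,\infty}(\theta, d) = E_{2,\infty}(\theta, d)$ for all $\theta, d \in \mathbb{S}^2$ therefore yields, by Rellich's lemma for Maxwell and unique continuation, $E_1^s \equiv E_2^s$ and $H_1^s \equiv H_2^s$ throughout $G$.

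\emph{Step 2 (Transfer to point-source data).} For $z \in G$ and $q \in \mathbb{R}^3$, let $(\tilde{E}_j^s(\cdot\,;z,q), \tilde{H}_j^s(\cdot\,;z,q))$ be the scattered fields produced by $\Omega_j$ when the incident field is the electric dipole $\Phi(\cdot\,;z,q)$; these are well defined by Theorem~\ref{Main_thm_direct} applied to the boundary data $\nu \times \Phi(\cdot\,;z,q)|_{\partial \Omega_j}$, which belongs to $L_{\tan}^{p,\operatorname{Div}}(\partial \Omega_j)$. The mixed-reciprocity identity --- which relies only on Green's identities in the vacuum exterior and thus carries over verbatim to the present moving-medium interior problem --- gives schematically
\begin{equation*}
p \cdot \tilde{E}_{j,\infty}(-d;z,q) \;=\; q \cdot E_j^s(z;d,p).
\end{equation*}
Combined with Step~1, this yields $q \cdot E_1^s(z;d,p) = q \cdot E_2^s(z;d,p)$ for every $q$, hence $E_1^s(z;d,p) = E_2^s(z;d,p)$ for all $z \in G$, $d \in \mathbb{S}^2$. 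Forming electric Herglotz superpositions $E_g^H(x) = \int_{\mathbb{S}^2} E^i(x;d,p)\, g(d)\, d\sigma(d)$ and using that as $d$ varies the polarization vector $(d \times p) \times d$ spans enough tangential directions, a standard (nontrivial) density argument shows that such Herglotz fields with the single polarization $p$ approximate arbitrary dipole incident fields on $\partial \Omega_j$, so that $\tilde{E}_1^s(\cdot\,;z_0,q_0) = \tilde{E}_2^s(\cdot\,;z_0,q_0)$ in $G$ for every $z_0 \in G$ and every $q_0 \in \mathbb{R}^3$.

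\emph{Step 3 (Blow-up and contradiction).} Choose a sequence $z_n \in G$ approaching $x^\ast$ along the outward normal to $\partial \Omega_1$, and a polarization $q$ aligned in a suitable tangential direction. Since $x^\ast \notin \overline{\Omega_2}$, the boundary data $\nu \times \Phi(\cdot\,;z_n,q)|_{\partial \Omega_2}$ remain uniformly bounded in $L_{\tan}^{p,\operatorname{Div}}(\partial \Omega_2)$, so Theorem~\ref{Main_thm_direct} yields a uniform bound on $\tilde{E}_2^s(\cdot\,;z_n,q)$ over any compact $K \subset G$ avoiding $\overline{\Omega_2}$. On the other hand, because $x^\ast \in \partial \Omega_1$, the singular part of $\Phi(\,\cdot\,;z_n,q)$ concentrates on $\partial \Omega_1$ near $x^\ast$ and cannot be absorbed by the regular transmitted field; standard arguments using the jump conditions on $\partial \Omega_1$ and the uniform Lippmann--Schwinger control of the regular remainder then give $\|\tilde{E}_1^s(\cdot\,;z_n,q)\|_{L^p(K)} \to \infty$ for a compact $K \Subset G$ near $x^\ast$. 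This contradicts Step~2 and forces $\Omega_1 = \Omega_2$.

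\emph{Main obstacle.} The delicate step is the single-polarization density argument in Step~2: the hypothesis provides $E_{j,\infty}(\theta, d, p)$ only for one fixed $p$, so the Herglotz superpositions live in a codimension-one subspace of incident Maxwell fields, and their traces on $\partial \Omega_j$ must nevertheless be dense enough to approximate every electric dipole trace. A secondary obstacle is quantifying the blow-up of $\tilde{E}_1^s$ uniformly in $n$ under the moving-medium interior problem; here the Lippmann--Schwinger formulation of Proposition~\ref{Lipp_s1} and the Born-series estimates underlying Proposition~\ref{well_posed} are essential to keep the regular remainder bounded while the singular part diverges.
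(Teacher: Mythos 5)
Your overall architecture (Isakov-style contradiction, point sources approaching $\partial\Omega_1$ at a point outside $\overline{\Omega_2}$, boundedness for $\Omega_2$ versus blow-up for $\Omega_1$) matches the paper, but Step~2 contains a genuine gap. You transfer plane-wave data to dipole data via a mixed-reciprocity identity that you claim ``carries over verbatim'' to the moving-medium interior problem. It does not: reciprocity relies on a symmetry of the constitutive relations, and the moving dielectric is a bianisotropic, non-reciprocal medium (the coupling blocks $C$, $T$ in the paper are antisymmetric cross-product matrices that break the required symmetry). The paper avoids reciprocity entirely: it superposes the plane waves $\operatorname{curl}\operatorname{curl}[p\,e^{ik_0\langle x,d\rangle}]$ with Herglotz densities $g_{z,n}$ so that ${\bf H}_K(g_{z,n})\to \Phi_{k_0}(\cdot-z)$ on $\partial K$, hence with all derivatives inside $K$, hence $\operatorname{curl}\operatorname{curl}{\bf H}_K(p\,g_{z,n})\to G(\cdot,z)p$; linearity of the scattering map plus the wellposedness estimate of Theorem~\ref{Main_thm_direct} then gives convergence of the far fields, so equality of the plane-wave far fields implies equality of the far fields for the dipole $G(\cdot,z)p$ \emph{with the same fixed polarization $p$}. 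This also dissolves your ``main obstacle'': there is no need to recover all dipole polarizations $q$ from one plane-wave polarization $p$; the single dipole family $G(\cdot,z)p$, $z\in(\Omega_1\cup\Omega_2)^c$, suffices for the contradiction.

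Your Step~3 is also under-specified at the point where the argument could actually fail. Saying the singularity ``cannot be absorbed by the regular transmitted field'' by ``standard arguments using the jump conditions'' does not establish a lower bound for the moving-medium scattered field. The paper's Lemma~\ref{blowup} does this by splitting $E_1^s=A+B+I_2$, where $B$ is the scattered field of the \emph{stationary} ($V=0$) transmission problem — bounded below by $c/d^3(z_n,\Omega_1)$ via the explicit reflected-image form $G^0_{\epsilon,\mu}(x,z)=G(x,z)-G(x,z^*)$ of the two-half-space Green tensor — while $A$ and $I_2$ are bounded above by constants proportional to $|V|/c_\Omega$ times the same rate. The conclusion therefore needs the smallness of $|V|/c_\Omega$ from Assumption~\ref{assum}; without that quantitative comparison the moving-medium corrections could a priori cancel the leading singularity, and your sketch does not rule this out.
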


There is a very large literature dealing with the inverse electromagnetic scattering problems by inhomogeneities, at rest, see for instance the following monographs \cite{MR2193218, MR0700400, MR1853728, MR4385553, MR4651388}. Regarding moving media, with general dependency in time of the inhomogeneities, there are a few of them as \cite{MR1642603} and \cite{MR1115177} with the references therein. Here, we consider a dielectric body moving a constant speed. We show the unique identifiability of the body's shape using the farfields at all the incident and observation directions. The vector $V$, and hence the moving speed $\vert V \vert$, are not assumed to be known. However, we need an a-priori  assumption on the ratio $\frac{\vert V \vert}{c}$. Observe that most, or maybe all, of the published reconstruction methods in the literature assume to know a-priori that the body is located in a  known large domain. Such an assumption makes little sense in our setting as the body is already moving. It would be interesting to see if these reconstruction method can be extended to the case of moving bodies but knowing a-priori the moving profile $V$. In addition, can we extend the notion of complex geometric solutions, see \cite{MR0873380}, of such moving media? Such a possibility might be reachable under conditions on the ratio $\frac{\vert V\vert}{c}$. Finally, as for the forward problem, we aim at looking at the related inverse problems in the time domain regime.

\subsection{Organization of the article}
We first recall preliminaries related to the function spaces and boundary integral operators used throughout this work in Section~\ref{sec2}. In Section \ref{sec3}, we reduce the scattering problem to the corresponding Lippmann-Schwinger system of equations while in Section \ref{sec5} we discuss the inversion of this system of integral equations. Based on these results, we state and proof the well-posedness for the original forward model. Finally, in Section~\ref{sec6}, we prove the identifiabilty result of the inverse problem for detecting the moving dielectric.

\subsection*{Acknowledgements} M. Kar was supported by MATRICS grant (MTR/2019/001349) of SERB. M. Sini was partially supported by the Austrian Science Fund (FWF): P 32660.

\section{Preliminaries}\label{sec2}
Recall that a domain $\Omega \subset \mathbb{R}^3$ is said to be $C^1$-smooth domain if its boundary can be locally described by graphs of $C^1$-smooth functions in appropriate coordinate systems. Here $\Gamma_{\pm}(x)$ denote the so-called nontangential approach regions, i.e. the interiors of the two components (in $\Omega$ and $\mathbb{R}^3\setminus\overline{\Omega}$, respectively) of a regular family of circular, doubly truncated cones $\{\Gamma(x) ; x \in \partial \Omega\}$, with vertex at $x$, as defined in \cite{MR501367} (for $C^1$ domains). 
More details of these and related topics can be found for example in \cite{MR1438894}, \cite{MR501367}, \cite{MR769382}. 
For each $1<p<\infty$, we define the following function spaces
\[
L_{\tan }^p(\partial\Omega)
= \left\{{f} \in L^p(\partial \Omega) ;\langle \nu, f\rangle=0  \ \text{a.e. on}\ \partial \Omega\right\}
\]
and
\[
W^{1,p}(\partial \Omega):=\left\{{f} \in L^p(\partial \Omega) ;\nabla_{\tan }f \in L^p(\partial \Omega)\right\}
\]
where $\nabla_{\tan }=-\nu \times(\nu \times \nabla)$ stands for the tangential gradient. Following \cite{MR1438894, MR1317629}, we introduce the surface divergence operator
$$\operatorname{Div}: L_{\tan }^p(\partial \Omega) \rightarrow W^{-1,p}(\partial \Omega):=\left(W^{1,q}(\partial \Omega)\right)^*, \quad \frac{1}{p} + \frac{1}{q} = 1$$ by requiring 
$$\int_{\partial \Omega} g \operatorname{Div} f d \sigma=\int_{\partial \Omega}\left\langle f, \nabla_{\tan } g\right\rangle d \sigma$$
for each $f \in L_{\tan }^p(\partial \Omega)$, and $g \in W^{1,q}(\partial \Omega)=\left(W^{-1,p}(\partial \Omega)\right)^*$. We also define
$$L_{\tan }^{p, \operatorname{Div}}(\partial \Omega):=\left\{f \in L_{\tan }^p(\partial \Omega) ; \operatorname{Div} f \in L^p(\partial \Omega)\right\}$$
equipped with the norm
$$\|f\|_{L_{\tan }^{p, \operatorname{Div}}(\partial \Omega)}:=\|f\|_{L^p(\partial \Omega)}+\|\operatorname{Div} f\|_{L^p(\partial \Omega)}.$$
 We also introduce
$$ W^{1,p}_{loc}(\operatorname{curl}, \mathbb{R}^3)=\left\{v \in L_{loc}^p(\mathbb{R}^3), \nabla \times v \in L_{loc}^p(\mathbb{R}^3)\right\}.
$$
We shall define potential operators and discuss their properties which will be useful in our analysis. Before that, we start with the fundamental solutions for Helmholtz equation. For each $k \in \mathbb{C}$, we let $\Phi_k$ stand for the standard radial fundamental solution for the Helmholtz operator $\Delta+k^2$ in $\mathbb{R}^3$,
\begin{equation}\label{Fundamental_sol}
    \Phi_k(x):=-\frac{e^{i k|x|}}{4 \pi|x|}, \quad x \neq 0.
\end{equation}
In particular, $\Phi_0$ is the usual fundamental solution for the Laplacian in $\mathbb{R}^3$.
The single-layer acoustic potential operator with density $f$ is defined by
$$
\mathcal{S}_k f(x):=\int_{\partial \Omega} \Phi_k(x-y) f(y) d \sigma(y), \quad x \in \mathbb{R}^3 \backslash \partial \Omega .
$$
For any $f\in L^p(\partial \Omega)$, the operator $\mathcal{S}_k f$ satisfies the Helmholtz equation $\left(\Delta+k^2\right) \mathcal{S}_k f=0$ in $\mathbb{R}^3 \backslash \partial \Omega$. The interior / exterior nontangential boundary traces of $\mathcal{S}_k f$ are given by
$$
\lim _{\substack{x \rightarrow y \\ x \in \Gamma_{+}(y)}} \mathcal{S}_k f(x)=\lim _{\substack{x \rightarrow y \\ x \in \Gamma_{-}(y)}} \mathcal{S}_k f(x)=S_k f(y), \quad y \in \partial \Omega,
$$
where
$$
S_k f(y):=-\frac{1}{4 \pi} \int_{\partial \Omega} \frac{e^{i k|x-y|}}{|x-y|} f(x) d \sigma(x), \quad y \in \partial \Omega .
$$
For almost any $y \in \partial \Omega$,
$$
\lim _{\substack{x \rightarrow y \\ x \in \Gamma_{\pm}(y)}} \frac{\partial \mathcal{S}_k f}{\partial \nu}(x):=\lim _{\substack{x \rightarrow y \\ x \in \Gamma_{\pm}(y)}}\left\langle \nu(y), \nabla \mathcal{S}_k f(x)\right\rangle=:\left(\mp \frac{1}{2} I+K_k^*\right) f(y),
$$
where $K_k^*$ is the formal transpose of the principal value integral operator
$$
K_k f(y):=\text { p.v. } \frac{1}{4 \pi} \int_{\partial \Omega} \frac{\langle \nu(x), x-y\rangle}{|x-y|^3} e^{i k|x-y|}(1-i k|x-y|) f(x) d \sigma(x),
$$
the called double-layer acoustic potential operator. In particular, one could notice that
$$
\lim _{\substack{x \rightarrow \Gamma(y) \\ x \in \Gamma_{\pm}(y)}} \mathcal{K}_k f(x)=\left(\pm \frac{1}{2} I+K_k\right) f(y),
$$
at almost any $y \in \partial \Omega$.
For $1<p<\infty$, we have
$$
\lim _{\substack{x \rightarrow y \\ x \in \Gamma_{\pm}(y)}} \operatorname{div} \mathcal{S}_k f(x)=\mp \frac{1}{2}\langle \nu, f\rangle(y)+\text { p.v. } \int_{\partial \Omega} \operatorname{div}_y\left\{\Phi_k(y-z) f(z)\right\} d \sigma,
$$
and
$$
\lim _{\substack{x \rightarrow y \\ x \in \Gamma_{\pm}(y)}} \operatorname{curl} \mathcal{S}_k f(x)=\mp \frac{1}{2}(\nu \times f)(y)+\text { p.v. } \int_{\partial \Omega} \operatorname{curl}_y\left\{\Phi_k(y-z) f(z)\right\} d \sigma
$$
holds for any $f\in L^p(\partial \Omega)$ and for almost any $y \in \partial \Omega$,
see \cite{MR501367, MR672839}.
We could also define the magnetic dipole operator by
$$
M_k f:=\nu \times\left(\text { p.v. curl } S_k f\right)
$$
and it follows that 
$$
\lim _{\substack{x \rightarrow y \\ x \in \Gamma_{\pm}(y)}} \nu(y) \times \operatorname{curl} \mathcal{S}_k f(x)=\left(\pm \frac{1}{2} I+M_k\right) f(y)
$$
holds for  $f \in L_{\tan }^p(\partial \Omega)$ and
at almost any $y \in \partial \Omega$, see for instance \cite{MR672839}.
Finally, we recall the Newtonian potential
\begin{equation}\label{Newton_Hel}
   L_k f(x):=\int_{\Omega} \Phi_k(x-y) f(y) d y, \quad x \in \mathbb{R}^3
\end{equation}
which is well defined for $f\in L^p(\Omega), p>1$. The induced Newtonian operator $L_k$ is actually well defined and bounded from $L^p(\Omega)$ to $W_{loc}^{2, p}(\mathbb{R}^3), ~p>1$.

\section{Reduction to the System of Lippmann-Schwinger Equations}\label{sec3}
In this section, we derive and invert the system of Lippmann-Schwinger equation related to the problem \eqref{model_EQ}. 
We will first write down our model as a perturbation of classical Maxwell system. We first start with fundamental solutions for classical Maxwell equations.
Remark from \eqref{model_EQ} that, the fields $E^{s}$ and $H^{s}$ satisfy the classical Maxwell system 
\[
\begin{array}{l}
\curl\; \begin{pmatrix}
 E^{s} \\ \\
 H^{s}
\end{pmatrix}
=
-i\omega
\tilde{A}^{-}\begin{pmatrix}
E^{s}\\ \\
 H^{s}
 \end{pmatrix}
 \end{array}
\]
in the domain $\mathbb{R}^3 \backslash \overline{\Omega}$, where 
\[
\tilde{A}^{-} =
\begin{pmatrix}
0 & -\mu_{0} \\ \\
\varepsilon_{0} & 0 
\end{pmatrix}
\]
and 
\begin{equation}\label{exterior_problem}
\begin{cases}\curl \ E^{s}=i\omega\mu_{0} H^{s} & \text { in } \mathbb{R}^3 \backslash \overline{\Omega} \\ \curl \ H^{s}=-i\omega\epsilon_{0} E^{s} & \text { in } \mathbb{R}^3 \backslash \overline{\Omega}.
\end{cases}
\end{equation}
It is well known that the outgoing fundamental solution of the Maxwell system \eqref{exterior_problem} is 
\begin{equation}\label{fund_soll}
  G = \omega \varepsilon_{0} \mu_{0}\left(\begin{array}{cc}
1+\frac{\nabla \nabla \cdot}{\omega^2 \varepsilon_{0} \mu_{0}} & \frac{i}{\omega \varepsilon_{0}} \nabla \times \\
-\frac{i}{\omega \mu_{0}} \nabla \times & 1+\frac{\nabla \nabla .}{\omega^2 \varepsilon_{0} \mu_{0}}
\end{array}\right) \Phi_{k_{0}},  
\end{equation}
where $\Phi_{k_{0}}$ is the radial fundamental solution for the Helmholtz operator $\Delta+k_{0}^2$ in $\mathbb{R}^3$.
If we define the operator $\mathcal{L}_0$ as
\[
\mathcal{L}_0 = \begin{pmatrix}
0 & \frac{i}{\varepsilon_{0}}\nabla\times \\ \\
-\frac{i}{\mu_{0}} \nabla\times & 0 
\end{pmatrix},
\]
then the fundamental solution $G$ satisfies
\[
(\mathcal{L}_0 - \omega)G = \delta I
\]
where $\delta$ is the Dirac distribution and $I$ is the identity matrix. Now we want to write down the differential operator appearing in equation \eqref{model_EQ} in the domain $\Omega$ as a perturbation of the differential operator $\mathcal{L}_0 - \omega$. To do that, we first write the equation \eqref{eq_matrix2} in the domain $\Omega$ as
\begin{equation}\label{Omega_1}
    -\nabla\times E + \mu V\times\nabla\times H = - i\omega\frac{1}{c^2}V\times E - i\omega\mu H
\end{equation}
and 
\begin{equation}\label{Omega_2}
     \varepsilon V\times\nabla\times E + \nabla\times H = - i\omega\varepsilon E +i\omega \frac{1}{c^2}V\times H.
\end{equation}
Multiplying by $\frac{i}{\mu_{0}}$ to the equation \eqref{Omega_1} and by $\frac{i}{\varepsilon_{0}}$ to the equation \eqref{Omega_2}, we obtain
\begin{equation}\label{Omega_3}
    -\frac{i}{\mu_{0}}\nabla\times E + \frac{i}{\mu_{0}}\mu V\times\nabla\times H = - \frac{i}{\mu_{0}}i\omega\frac{1}{c^2}V\times E - \frac{i}{\mu_{0}}i\omega\mu H
\end{equation}
and 
\begin{equation}\label{Omega_4}
    \frac{i}{\varepsilon_{0}} \varepsilon V\times\nabla\times E + \frac{i}{\varepsilon_{0}}\nabla\times H = - \frac{i}{\varepsilon_{0}}i\omega\varepsilon E + \frac{i}{\varepsilon_{0}}i\omega \frac{1}{c^2}V\times H.
\end{equation}
Simplifying further the above two equations, we write
\begin{equation}
 \begin{pmatrix}
1 & -\mu_{0}\frac{\varepsilon}{\varepsilon_{0}} V\times \\ \\
\epsilon_{0} \frac{\mu}{\mu_{0}}V\times & 1 
\end{pmatrix}
\begin{pmatrix}
 \frac{i}{\varepsilon_{0}}\nabla\times H \\ \\
 -\frac{i}{\mu_{0}}\nabla\times E
\end{pmatrix}
=
\begin{pmatrix}
1 & -\frac{1}{c^2}\frac{1}{\varepsilon_{0}}\frac{\mu_{0}}{\mu}V\times \\ \\
\frac{1}{c^2}\frac{1}{\mu_{0}}\frac{\varepsilon_{0}}{\varepsilon}V\times & 1 
\end{pmatrix}
\begin{pmatrix}
\omega\frac{\varepsilon}{\varepsilon_{0}} E\\ \\
\omega\frac{\mu}{\mu_{0}} H
\end{pmatrix}
\end{equation}
and this implies
\begin{equation}\label{eq_matrix5}
\begin{pmatrix}
 \frac{i}{\varepsilon_{0}}\nabla\times H \\ \\
 -\frac{i}{\mu_{0}}\nabla\times E
\end{pmatrix}
=
 \begin{pmatrix}
1 & -\mu_{0}\frac{\varepsilon}{\varepsilon_{0}} V\times \\ \\
\varepsilon_{0} \frac{\mu}{\mu_{0}}V\times & 1 
\end{pmatrix}^{-1}
\begin{pmatrix}
1 & -\frac{1}{c^2}\frac{1}{\varepsilon_{0}}\frac{\mu_{0}}{\mu}V\times \\ \\
\frac{1}{c^2}\frac{1}{\mu_{0}}\frac{\varepsilon_{0}}{\varepsilon}V\times & 1 
\end{pmatrix}
\begin{pmatrix}
\omega\frac{\varepsilon}{\varepsilon_{0}} E\\ \\
\omega\frac{\mu}{\mu_{0}} H
\end{pmatrix}.
\end{equation}
Write
\[
A :=  \begin{pmatrix}
1 & -\mu_{0}\frac{\varepsilon}{\varepsilon_{0}} V\times \\ \\
\varepsilon_{0} \frac{\mu}{\mu_{0}}V\times & 1 
\end{pmatrix}^{-1}
=
\left(\begin{pmatrix}
1 & 0 \\ \\
0 & 1 
\end{pmatrix} -  \begin{pmatrix}
0 & \mu_{0}\frac{\varepsilon}{\varepsilon_{0}} V\times \\ \\
-\varepsilon_{0} \frac{\mu}{\mu_{0}}V\times & 0 
\end{pmatrix}\right)^{-1}.
\]
Denote 
\[
T := \begin{pmatrix}
0 & \mu_{0}\frac{\varepsilon}{\varepsilon_{0}} V\times \\ \\
-\varepsilon_{0} \frac{\mu}{\mu_{0}}V\times & 0 
\end{pmatrix}
\]
then the matrix $A$ can be written as the Neumann series 
\[
A = (I-T)^{-1}
=I + T + T^2 + \cdots
=I + B
\]
where $B=T + T^2 + T^3 + \cdots$.
Here the series $(I-T)^{-1}$ converges due to the fact that 
\[
\|T\|_{\infty} = \max\{\mu_{0}\frac{\varepsilon}{\varepsilon_{0}}|V|,\varepsilon_{0} \frac{\mu}{\mu_{0}}|V|\} < 1.
\]
Therefore, the equation \eqref{eq_matrix5} can be rewritten as
\begin{equation}\label{matrix_eq11}
   \begin{pmatrix}
 \frac{i}{\varepsilon_{0}}\nabla\times H \\ \\
 -\frac{i}{\mu_{0}}\nabla\times E
\end{pmatrix}
=
(I+B)(C+D)
\begin{pmatrix}
\omega\frac{\varepsilon}{\varepsilon_{0}} E\\ \\
\omega\frac{\mu}{\mu_{0}} H
\end{pmatrix}
\end{equation}
where
\[
C := \begin{pmatrix}
0 & -\frac{1}{c^2}\frac{1}{\varepsilon_{0}}\frac{\mu_{0}}{\mu}V\times \\ \\
\frac{1}{c^2}\frac{1}{\mu_{0}}\frac{\varepsilon_{0}}{\varepsilon}V\times & 0 
\end{pmatrix}
\]
and 
\[
D :=
\begin{pmatrix}
1  & 0 \\ \\
0 & 1
\end{pmatrix}.
\]
Also the equation \eqref{matrix_eq11} can be written as
\[
  \begin{pmatrix}
 \frac{i}{\varepsilon_{0}}\nabla\times H \\ \\
 -\frac{i}{\mu_{0}}\nabla\times E
\end{pmatrix}
=
(C+D+BC+BD)
\begin{pmatrix}
\omega\frac{\varepsilon}{\varepsilon_{0}} E\\ \\
\omega\frac{\mu}{\mu_{0}} H
\end{pmatrix}
\]
that is 
\[
  \begin{pmatrix}
 \frac{i}{\varepsilon_{0}}\nabla\times H \\ \\
 -\frac{i}{\mu_{0}}\nabla\times E
\end{pmatrix}
=
\begin{pmatrix}
\omega\frac{\varepsilon}{\varepsilon_{0}} E\\ \\
\omega\frac{\mu}{\mu_{0}} H
\end{pmatrix}
+(C+BC+BD)
\begin{pmatrix}
\omega\frac{\varepsilon}{\varepsilon_{0}}  & 0 \\ \\
0 & \omega\frac{\mu}{\mu_{0}} 
\end{pmatrix}
\begin{pmatrix}
 E \\ \\
 H
\end{pmatrix}
\]
and hence
\[
 \begin{pmatrix}
 \frac{i}{\varepsilon_{0}}\nabla\times H -\omega\frac{\varepsilon}{\varepsilon_{0}} E \\ \\
 -\frac{i}{\mu_{0}}\nabla\times E - \omega\frac{\mu}{\mu_{0}} H
\end{pmatrix}
=(C+BC+BD)
\begin{pmatrix}
\omega\frac{\varepsilon}{\varepsilon_{0}}  & 0 \\ \\
0 & \omega\frac{\mu}{\mu_{0}} 
\end{pmatrix}
\begin{pmatrix}
 E \\ \\
 H
\end{pmatrix}
\]
which we write as
\begin{align}
&(\mathcal{L}_0 - \omega)\begin{pmatrix}
 E\\ \\
 H
\end{pmatrix}\\
&=\omega
\begin{pmatrix}
\frac{\varepsilon}{\varepsilon_{0}}-1 & 0 \\ \\
0 & \frac{\mu}{\mu_{0}}-1
\end{pmatrix}\begin{pmatrix}
 E\\ \\
 H
\end{pmatrix}
+\omega(C+BC+BD)
\begin{pmatrix}
\frac{\varepsilon}{\varepsilon_{0}}  & 0 \\ \\
0 & \frac{\mu}{\mu_{0}} 
\end{pmatrix}
\begin{pmatrix}
 E \\ \\
 H
\end{pmatrix} \\
&=\omega\mathcal{M}\begin{pmatrix}
 E \\ \\
 H
\end{pmatrix}
\end{align}
where 
\[
\mathcal{M} := \begin{pmatrix}
\frac{\varepsilon}{\varepsilon_{0}}-1 & 0 \\ \\
0 & \frac{\mu}{\mu_{0}}-1
\end{pmatrix}
+(C+BC+BD)
\begin{pmatrix}
\frac{\varepsilon}{\varepsilon_{0}}  & 0 \\ \\
0 & \frac{\mu}{\mu_{0}} 
\end{pmatrix}.
\]
As $(\mathcal{L}_0 - \omega)\begin{pmatrix}
 E^i\\ \\
 H^i
\end{pmatrix}=0$, then we get 
\begin{equation}
(\mathcal{L}_0 - \omega)\begin{pmatrix}
 E-E^i\\ \\
 H-H^i
\end{pmatrix}=\omega\mathcal{M}\begin{pmatrix}
 E \\ \\
 H
\end{pmatrix}.
\end{equation} 
Since $E-E^i$ and $H-H^i$ satisfy the Silver-M\"uller radiation conditions, then this last system implies (\ref{Lipp_s}).
\section{Inversion of the Lippmann–Schwinger system of equations}\label{sec5}
Note that
\[C + BC + BD = (I + B)C + BD.
\]
Assuming $\|T\|<1$, then we write
\[
(I+B)C = (I + T + T^2 + \cdots)C = (I-T)^{-1}C.
\]
Also
\[
B = (T+T^2+\cdots) = (I+T+T^2+\cdots)T = (I-T)^{-1}T.
\]
Therefore 
\[
C+BC+BD = (I-T)^{-1}(C+T)
\]
and
\[
C+T = \begin{pmatrix}
0  & \frac{\mu_{0}}{\mu\varepsilon_{0}}(\frac{1}{c_{\Omega}^{2}}-\frac{1}{c^2}) V\times \\ \\
-\frac{\varepsilon_{0}}{\mu_{0}\varepsilon}(\frac{1}{c_{\Omega}^{2}}-\frac{1}{c^2})  V\times  & 0
\end{pmatrix}.
\]

We now define
\[
G_{\mathcal{M}}\begin{pmatrix}
 E\\ \\
 H
\end{pmatrix}(x)
=
\omega
\int_{\Omega}
G(x-y)
\mathcal{M} \begin{pmatrix}
 E(y)\\ \\
 H(y)
\end{pmatrix} dy.
\]
Then we express $G_{\mathcal{M}}$ in the following form
\[
G_{\mathcal{M}} = G_{-1} + G_{0} + G_{1} + \cdots
\]
where
\[
G_{-1} \begin{pmatrix}E\\ \\
 H
\end{pmatrix}(x)
=
\omega
\int_{\Omega}
G(x-y)
\begin{pmatrix}
\frac{\varepsilon}{\varepsilon_{0}}-1 & 0 \\ \\
0 & \frac{\mu}{\mu_{0}}-1
\end{pmatrix} \begin{pmatrix}
 E(y)\\ \\
 H(y)
\end{pmatrix} dy
\]
and
\begin{equation}\label{G_j-equation}
G_{j} \begin{pmatrix}E\\ \\
 H
\end{pmatrix}(x)
=
\omega
\int_{\Omega}
G(x-y)
T^{j}(C+T) \begin{pmatrix}
\frac{\varepsilon}{\varepsilon_{0}}  & 0 \\ \\
0 & \frac{\mu}{\mu_{0}} 
\end{pmatrix}\begin{pmatrix}
 E(y)\\ \\
 H(y)
\end{pmatrix} dy, \ \text{for}\ j\geq 0. 
\end{equation}
Therefore the equation (\ref{Lipp_s}) becomes
\[
\left( I- \sum_{j=-1}^{\infty}G_j\right) \begin{pmatrix}E\\ \\
 H
\end{pmatrix}
=
\begin{pmatrix}E^{i}\\ \\
 H^{i}
\end{pmatrix}
\]
or equivalently one could write it as
\[
\left( K- \sum_{j=0}^{\infty}G_j\right) \begin{pmatrix}E\\ \\
 H
\end{pmatrix}
=
\begin{pmatrix}E^{i}\\ \\
 H^{i}
\end{pmatrix}
\]
where 
$K:= I - G_{-1}$. If the operator $K- \sum_{j=0}^{\infty}G_j$ is invertible, then the above equation can be further simplified as 
\[
 \begin{pmatrix}E\\ \\
 H
\end{pmatrix}
=
\left( K- \sum_{j=0}^{\infty}G_j\right)^{-1}\begin{pmatrix}E^{i}\\ \\
 H^{i}
\end{pmatrix}.
\]
In the next theorem, we discuss under what condition the operator $K- \sum_{j=0}^{\infty}G_j$ is invertible. Notice that if $K$ is invertible, then the inverse of $K- \sum_{j=0}^{\infty}G_j$ can be written as
\[
\left(K- \sum_{j=0}^{\infty}G_j\right)^{-1} = K^{-1}\left( I- K^{-1}\sum_{j=0}^{\infty}G_j\right)^{-1}.
\]
Hence the solution of the Lippmann-Schwinger equation \eqref{Lipp_spacetime} is of the form
\[
 \begin{pmatrix}E\\ \\
 H
\end{pmatrix}
=
K^{-1}\left( I- K^{-1}\sum_{j=0}^{\infty}G_j\right)^{-1}\begin{pmatrix}E^{i}\\ \\
 H^{i}
\end{pmatrix}.
\]
\begin{theorem}\label{opera_est} 
Let $1<p<\infty.$ Assume $\Omega$ to be a bounded $C^1$-smooth domain and also the coefficients satisfy Assumption \ref{assum}. Then the operator $K= I - G_{-1}$ is invertible from $L^p(\Omega) \times L^p(\Omega)$ to $L^p(\Omega) \times L^p(\Omega)$. Furthermore, the operator 
\[
I- K^{-1}\sum_{j=0}^{\infty}G_j
\]
is also invertible from $L^p(\Omega) \times L^p(\Omega)$ to $L^p(\Omega) \times L^p(\Omega)$ and the inverse can be written as
\[
\left( I- K^{-1}\sum_{j=0}^{\infty}G_j\right)^{-1}
= \sum_{j=0}^{\infty}W^j
\]
where $W = K^{-1}\sum_{j=0}^{\infty}G_j.$ Moreover, $\|W\|<1$ where the norm is the usual operator norm.
\end{theorem}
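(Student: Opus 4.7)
My plan is to chain two Neumann-type expansions: first invert $K=I-G_{-1}$, then show that $\|W\|=\|K^{-1}\sum_{j\geq 0}G_j\|<1$ by exploiting the factor $(1-c_\Omega/c)$ supplied by Assumption \ref{assum}(2). The workhorse is a single $L^p\to L^p$ estimate on the volume-integral operator with kernel $G$.

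From formula \eqref{fund_soll}, $G$ decomposes into a weakly singular Newtonian part $\omega\varepsilon_0\mu_0\Phi_{k_0}*$, second-order singular (Calderon--Zygmund) pieces involving $\nabla\nabla\,\Phi_{k_0}*$, and first-order Riesz-type pieces $\mu_0\,\nabla\times\Phi_{k_0}*$ and $\varepsilon_0\,\nabla\times\Phi_{k_0}*$. Classical Calderon--Zygmund theory together with Newtonian-potential bounds then yield, for $1<p<\infty$,
\begin{equation*}
\Big\|\int_\Omega G(\cdot-y)f(y)\,dy\Big\|_{L^p(\Omega)}\leq C(p,\omega)\max\{1,\omega\varepsilon_0,\omega\mu_0,\omega^2\varepsilon_0\mu_0\}\|f\|_{L^p(\Omega)}.
\end{equation*}
Applying this with the contrast multiplier $\mathrm{diag}(\varepsilon/\varepsilon_0-1,\mu/\mu_0-1)$ controls $G_{-1}$. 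The operator $K=I-G_{-1}$ is exactly the Lippmann--Schwinger volume operator of the classical (at-rest) Maxwell transmission problem for a dielectric $(\varepsilon,\mu)$ in a background $(\varepsilon_0,\mu_0)$; its invertibility on $L^p(\Omega)\times L^p(\Omega)$ follows from Fredholm theory together with uniqueness of the exterior Maxwell scattering problem via the Silver-M\"uller condition and Rellich's lemma. This provides a bound $\|K^{-1}\|_{L^p\to L^p}\leq C_1=C_1(\partial\Omega,\varepsilon,\mu,\varepsilon_0,\mu_0,\omega)$ that is independent of $V$.

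For the remainder, I use the algebraic identity $C+BC+BD=(I-T)^{-1}(C+T)$ derived in Section \ref{sec5}. Assumption \ref{assum}(1) gives $\|T\|_\infty\leq\max\{\mu_0\varepsilon/\varepsilon_0,\varepsilon_0\mu/\mu_0\}|V|<1$, so $(I-T)^{-1}=\sum_{j\geq 0}T^j$ converges with bounded norm. The decisive observation is the factorization
\begin{equation*}
\frac{1}{c_\Omega^2}-\frac{1}{c^2}=\Big(1-\frac{c_\Omega}{c}\Big)\frac{c+c_\Omega}{c\,c_\Omega^2}\leq 2\Big(1-\frac{c_\Omega}{c}\Big)\varepsilon\mu,
\end{equation*}
which forces every nonzero entry of $C+T$ to carry the factor $1-c_\Omega/c$. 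Combining with the $L^p$ estimate above and the relativistic bound $|V|<c_\Omega<c$ from \eqref{V-speeds}, one obtains
\begin{equation*}
\Big\|\sum_{j\geq 0}G_j\Big\|_{L^p\to L^p}\leq C_2(p,\omega)\,(1-c_\Omega/c)\,\max\{\varepsilon/\varepsilon_0,\mu/\mu_0\}\cdot M,
\end{equation*}
where $M$ is the inner $\max$ appearing in \eqref{regimes-2}. Multiplying by $\|K^{-1}\|$ and absorbing the $V$-independent product $C_1C_2$ into the universal constant $C(p,\omega)$ of Assumption \ref{assum}(2) yields $\|W\|\leq 1/2<1$, so that $(I-W)^{-1}=\sum_{n\geq 0}W^n$ converges in the operator norm, proving the stated inversion formula.

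The main obstacle is the $L^p$ invertibility of $K$ with a quantitative bound on $\|K^{-1}\|$ uniform enough to be absorbed into $C(p,\omega)$ of Assumption \ref{assum}(2). Because $G_{-1}$ contains the non-compact Calderon--Zygmund piece $\nabla\nabla\,\Phi_{k_0}*$, a naive Fredholm/Rellich argument at the $L^p$ level is unavailable; one has to invoke $L^{p,\mathrm{Div}}_{\tan}(\partial\Omega)$ layer-potential invertibility in the spirit of Mitrea's work on Maxwell transmission problems, or a limiting-absorption principle, to close this step.
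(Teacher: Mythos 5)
Your treatment of the second half of the theorem (showing $\|W\|<1$) matches the paper's: both arguments extract the factorization $C+BC+BD=(I-T)^{-1}(C+T)$, observe that every entry of $C+T$ carries the factor $\frac{1}{c_\Omega^2}-\frac{1}{c^2}$, and combine this with the Calderon--Zygmund/Newtonian-potential bound on the volume operator with kernel $G$ to make $\sum_{j\ge 0}\|G_j\|$ small under Assumption \ref{assum}, with the geometric factor $\|T\|^j<1$ ensuring summability.

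The genuine divergence --- and the gap --- is in the invertibility of $K=I-G_{-1}$. You propose Fredholm theory plus uniqueness of the exterior scattering problem, and you then correctly flag that this cannot work as stated, because the $\nabla\nabla\cdot\,\Phi_{k_0}$ block of $G_{-1}$ is a non-compact Calderon--Zygmund operator on $L^p(\Omega)$, so the Riesz--Fredholm alternative does not apply; you leave the step unresolved, pointing to layer-potential machinery. But the obstacle is self-inflicted: condition \eqref{regimes-2} of Assumption \ref{assum} is a smallness condition on the contrast $(\varepsilon/\varepsilon_0-1,\ \mu/\mu_0-1)$ designed precisely so that the same $L^p$ estimate you already established yields $\|G_{-1}\|<1$ directly. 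The paper then inverts $K$ by the elementary Neumann series $K^{-1}=\sum_{j\ge 0}(G_{-1})^j$ with the quantitative bound $\|K^{-1}\|\le (1-\|G_{-1}\|)^{-1}$, which is exactly the $V$-independent control on $\|K^{-1}\|$ that your argument needs and cannot otherwise supply: a Fredholm-type bound on $\|K^{-1}\|$ would depend on $\partial\Omega,\varepsilon,\mu$ and could not be ``absorbed into the universal constant $C(p,\omega)$'' appearing inside the hypothesis \eqref{regimes-2}, since that constant is fixed before the assumption is imposed. In short, the theorem is not claimed for arbitrary contrast; once you read Assumption \ref{assum}(2) as a Born-series convergence condition for the at-rest problem, your own $L^p$ estimate closes the step you identified as the main obstacle, and no Fredholm, Rellich, or layer-potential input is required.
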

\begin{proof}
Note that $K= I - G_{-1}.$ If we can show that $\|G_{-1}\| <1$, then  applying Neumann series expansion, we could express the inverse of $K$ as
\[
K^{-1} = (I - G_{-1})^{-1} = \sum_{j=0}^{\infty}(G_{-1})^j
\]
with the norm estimate
\[
\|K^{-1}\| \leq \frac{1}{1-\|G_{-1}\|},
\]
where 
\[
G_{-1} \begin{pmatrix}E\\ \\
 H
\end{pmatrix}(x)
=
\omega
\int_{\Omega}
G(x-y)
\begin{pmatrix}
\frac{\varepsilon}{\varepsilon_{0}}-1 & 0 \\ \\
0 & \frac{\mu}{\mu_{0}}-1
\end{pmatrix} \begin{pmatrix}
 E(y)\\ \\
 H(y)
\end{pmatrix} dy.
\]
Similarly, we can use Neumann series expansion to examine the invertibility properties of the operator $I- K^{-1}\sum_{j=0}^{\infty}G_j$, if we can show $\|W\|<1$. Observe that 
\begin{equation}\label{estimate_W}
  \|W\| = \|K^{-1}\sum_{j=0}^{\infty}G_j\| \leq \|K^{-1}\| \|\sum_{j=0}^{\infty}G_j\| \leq \|K^{-1}\| \sum_{j=0}^{\infty}\|G_j\|,  
\end{equation}
where
\[
G_{j} \begin{pmatrix}E\\ \\
 H
\end{pmatrix}(x)
=
\omega
\int_{\Omega}
G(x-y)
T^{j}(C+T) \begin{pmatrix}
\frac{\varepsilon}{\varepsilon_{0}}  & 0 \\ \\
0 & \frac{\mu}{\mu_{0}} 
\end{pmatrix}\begin{pmatrix}
 E(y)\\ \\
 H(y)
\end{pmatrix} dy, \ \text{for}\ j\geq 0. 
\]
To show $\|W\|<1$, from \eqref{estimate_W}, it is enough to prove that $\|G_{j}\|<1$ for all $j\geq 0.$ 
Therefore, we next focus on estimating the norm of $G_j$ for all $j\geq -1.$ Note that
\[
\begin{aligned}
& G_{-1} \begin{pmatrix}E\\ \\
 H
\end{pmatrix}(x)\\
&=
\omega
\int_{\Omega}
G(x-y)
\begin{pmatrix}
\frac{\varepsilon}{\varepsilon_{0}}-1 & 0 \\ \\
0 & \frac{\mu}{\mu_{0}}-1
\end{pmatrix} \begin{pmatrix}
 E(y)\\ \\
 H(y)
\end{pmatrix} dy\\
&=
\omega^2 \varepsilon_{0} \mu_{0}
\int_{\Omega}
\left(\begin{array}{cc}
1+\frac{\nabla \nabla \cdot}{\omega^2 \varepsilon_{0} \mu_{0}} & \frac{i}{\omega \varepsilon_{0}} \nabla \wedge \\
-\frac{i}{\omega \mu_{0}} \nabla \wedge & 1+\frac{\nabla \nabla .}{\omega^2 \varepsilon_{0} \mu_{0}}
\end{array}\right) \Phi_{k_{0}}(x-y)
\begin{pmatrix}
\frac{\varepsilon}{\varepsilon_{0}}-1 & 0 \\ \\
0 & \frac{\mu}{\mu_{0}}-1
\end{pmatrix} \begin{pmatrix}
 E(y)\\ \\
 H(y)
\end{pmatrix} dy\\
&=
\omega^2 \varepsilon_{0} \mu_{0}
\left(\begin{array}{cc}
1+\frac{\nabla \nabla \cdot}{\omega^2 \varepsilon_{0} \mu_{0}} & \frac{i}{\omega \varepsilon_{0}} \nabla \wedge \\
-\frac{i}{\omega \mu_{0}} \nabla \wedge & 1+\frac{\nabla \nabla .}{\omega^2 \varepsilon_{0} \mu_{0}}
\end{array}\right)\int_{\Omega}
 \Phi_{k_{0}}(x-y)
\begin{pmatrix}
\frac{\varepsilon}{\varepsilon_{0}}-1 & 0 \\ \\
0 & \frac{\mu}{\mu_{0}}-1
\end{pmatrix} \begin{pmatrix}
 E(y)\\ \\
 H(y)
\end{pmatrix} dy\\
&=
\omega^2 \varepsilon_{0} \mu_{0}
\left(\begin{array}{cc}
1+\frac{\nabla \nabla \cdot}{\omega^2 \varepsilon_{0} \mu_{0}} & \frac{i}{\omega \varepsilon_{0}} \nabla \wedge \\
-\frac{i}{\omega \mu_{0}} \nabla \wedge & 1+\frac{\nabla \nabla .}{\omega^2 \varepsilon_{0} \mu_{0}}
\end{array}\right)
\begin{pmatrix}
 L_{k_{0}} ((\frac{\varepsilon}{\varepsilon_{0}}-1 )E(x))\\ \\
 L_{k_{0}} ((\frac{\mu}{\mu_{0}}-1)H(x))
\end{pmatrix}, 
\end{aligned}
\]
where 
$$
L_{k_{0}} f(x)=\int_{\Omega} \Phi_{k_{0}}(x-y) f(y) d y, \quad x \in \Omega
$$
stands for the Newtonian potential type operator. We denote $G^{1}_{-1}$ as the first component of the vector $G_{-1} \begin{pmatrix}E\\ \\
 H
\end{pmatrix}(x)$ and write 
\[
G^{1}_{-1} =
\omega^2 \varepsilon_{0} \mu_{0}\left(\left[1+\frac{\nabla \nabla \cdot}{\omega^2 \varepsilon_{0} \mu_{0}}\right]L_{k_{0}} ((\frac{\varepsilon}{\varepsilon_{0}}-1 )E(x))
+ \frac{i}{\omega \varepsilon_{0}} \nabla \wedge \left[ L_{k_{0}} ((\frac{\mu}{\mu_{0}}-1)H(x))\right]\right).
\]
Similarly, we denote $G^{2}_{-1}$ as the second component of the vector $G_{-1} \begin{pmatrix}E\\ \\
 H
\end{pmatrix}(x)$ and write
\[
G^{2}_{-1} =
\omega^2 \varepsilon_{0} \mu_{0}
\left(-\frac{i}{\omega \mu_{0}} \nabla \wedge L_{k_{0}} ((\frac{\varepsilon}{\varepsilon_{0}}-1 )E(x)) + \left[1+\frac{\nabla \nabla .}{\omega^2 \varepsilon_{0} \mu_{0}}\right]  L_{k_{0}} ((\frac{\mu}{\mu_{0}}-1)H(x))\right).
\]
We now recall some of the basic properties of the Newtonian potential operator corresponding to Laplacian. Then we adopt the similar results for the Newtonian potential operator for Helmholtz operator. 
Remark from \eqref{Newton_Hel} that, for any function $f\in L^p(\Omega),$ the Newtonian potential of $f$ corresponding to the Laplace operator is defined by the convolution
\begin{equation}\label{Newton_Laplace}
   L_0 f(x):=\int_{\Omega} \Phi_0(x-y) f(y) d y, \quad x \in \Omega, 
\end{equation}
where $\Phi_0$ is the fundamental solution of Laplace's equation given by \eqref{Fundamental_sol}.
As a consequence of Calderon-Zygmund theory, see \cite[Theorem 9.9]{MR1814364}, for any $f \in L^p(\Omega), 1<p<\infty$, the Newtonian potential $ L_0 f \in W^{2, p}(\Omega)$ and satisfies $\Delta ( L_0 f)=f$ almost everywhere. Moreover, we have the norm estimate
 \begin{equation}\label{Cal_Zyg_est}
  \left\|D^2 ( L_0 f)\right\|_{L^p(\Omega)} \leqslant C(p)\|f\|_{L^p(\Omega)}   
 \end{equation}
where $C$ depends only on $p$ and $D^2 ( L_0 f)$ represents the Hessian of the function $L_0 f$.
In particular, when $p=2$ we have
$$
\int_{\Omega}\left|D^2  ( L_0 f)(x)\right|^2 dx=\int_{\Omega} |f(x)|^2dx.
$$
We now deduce the estimates similar to \eqref{Cal_Zyg_est} corresponding to Helmholtz operator. Let us first write $r = |x-y|$ and with the help of Taylor series expansion, we have
\[
e^{ik_{0}r} = 1 + ik_{0}r + \frac{1}{2!} (ik_{0}r)^2 + \frac{1}{3!} (ik_{0}r)^3 + \cdots 
\]
for all $r\neq 0.$ 
Therefore, for $x \neq y$, we simplify the fundamental solution of Helmholtz equation as
\[
\begin{aligned}
\Phi_{k_{0}}(x-y) 
&= -\frac{e^{i k_{0}|x-y|}}{4 \pi|x-y|} \\
&= -\frac{1}{4 \pi|x-y|} [1 + ik_{0}|x-y| + \frac{1}{2!} (i^2k_{0}^2 |x-y|^2) + \frac{1}{3!} (i^3k_{0}^3 |x-y|^3) + \cdots]\\
&= \Phi_{0}(x-y) + \Phi_{\text{smooth}}(x-y),
\end{aligned}
\]
where $\Phi_0$ is the fundamental solution and $\Phi_{\text{smooth}}$ the smooth kernel defined as
\[
\Phi_{\text{smooth}}(x-y):= -\frac{ik_{0}}{4 \pi} [1 + ik_{0}|x-y| + \frac{1}{2!} (ik_{0} |x-y|) + \frac{1}{3!} (i^2k_{0}^2 |x-y|^2) + \cdots].
\]
Thus the Newtonian potential related to the Helmholtz equation can be written as
\begin{equation}
    L_{k_{0}} f(x) = L_0 f(x) + L_{\text{smooth}} f(x),
\end{equation}
where 
\[
L_{\text{smooth}} f(x):=\int_{\Omega} \Phi_{\text{smooth}}(x-y) f(y) d y, \quad x \in \Omega. 
\]
Using the estimate \eqref{Cal_Zyg_est} and the properties of the smooth kernel $\Phi_{\text{smooth}}$, we obtain
\begin{equation}\label{Cal_Zyg_est_Hel}
    \left\|D^2 ( L_{k_{0}} f)\right\|_{L^p(\Omega)} \leqslant C(p, \omega)\|f\|_{L^p(\Omega)}
\end{equation}
and in general
\begin{equation}\label{Cal_Zyg_est_Hel_fun}
    \left\|L_{k_{0}} f\right\|_{W^{2, p}(\Omega)} \leqslant C(p, \omega)\|f\|_{L^p(\Omega)} .
\end{equation}
Note that, the above estimates \eqref{Cal_Zyg_est_Hel} also holds true even if we take $f$ as a vector valued function. If $f \in L^p(\Omega,\mathbb{R}^3), 1<p<\infty$, we deduce from (\ref{Cal_Zyg_est_Hel_fun}) that
\begin{equation}\label{Cal_nabla}
   \left\|\nabla\times ( L_{k_{0}} f)\right\|_{L^p(\Omega)} \leqslant C(p, \omega)\|f\|_{L^p(\Omega)}   
\end{equation}
and 
\begin{equation}\label{Cal_nablanabla}
   \left\|\nabla\nabla\cdot ( L_{k_{0}} f)\right\|_{L^p(\Omega)} \leqslant C(p, \omega)\|f\|_{L^p(\Omega)} .  
\end{equation}
Combining \eqref{Cal_Zyg_est_Hel_fun}, \eqref{Cal_nabla} and \eqref{Cal_nablanabla}, we obtain
\begin{equation}
    \begin{aligned}
      &  \|G^{1}_{-1} \begin{pmatrix}E\\ \\
 H
\end{pmatrix}\|_{L^p(\Omega)} \\
&\leq 
\omega^2 \varepsilon_{0} \mu_{0}\|L_{k_{0}} ((\frac{\varepsilon}{\varepsilon_{0}}-1 )E)\|_{L^p(\Omega)}+\|\nabla \nabla \cdot L_{k_{0}} ((\frac{\varepsilon}{\varepsilon_{0}}-1 )E)\|_{L^p(\Omega)}
 \\
&\quad + \omega \mu_{0} \|\nabla \times [ L_{k_{0}} ((\frac{\mu}{\mu_{0}}-1)H)]\|_{L^p(\Omega)}\\
&\leq C(p, \omega)\omega^2 \varepsilon_{0} \mu_{0}\|\frac{\varepsilon}{\varepsilon_{0}}-1\|_{L^{\infty}(\Omega)}\|E\|_{L^p(\Omega)} + C(p, \omega) \|\frac{\varepsilon}{\varepsilon_{0}}-1\|_{L^{\infty}(\Omega)}\|E\|_{L^p(\Omega)}\\
&\quad + C(p, \omega)\omega \mu_{0}  \|\frac{\mu}{\mu_{0}}-1\|_{L^{\infty}(\Omega)}  \|H\|_{L^p(\Omega)}\\
&\leq C(p, \omega)\max\{1, \omega^2 \varepsilon_{0} \mu_{0}\} \|\frac{\varepsilon}{\varepsilon_{0}}-1\|_{L^{\infty}(\Omega)} \|E\|_{L^p(\Omega)} \\
&\quad + C(p, \omega)\omega \mu_{0} \|\frac{\mu}{\mu_{0}}-1\|_{L^{\infty}(\Omega)}  \|H\|_{L^p(\Omega)}\\
& \leq C_{G^{1}_{-1}} \left[\|E\|_{L^p(\Omega)} + \|H\|_{L^p(\Omega)}\right],
    \end{aligned}
\end{equation}
and 
\begin{equation}
    \begin{aligned}
       & \|G^{2}_{-1}\begin{pmatrix}E\\ \\
 H
\end{pmatrix}\|_{L^p(\Omega) }
\\
&\leq 
\omega \varepsilon_{0}
\|\nabla \times L_{k_{0}} ((\frac{\varepsilon}{\varepsilon_{0}}-1 )E)\|_{L^p(\Omega)}
 \\
&\quad  + \omega^2 \varepsilon_{0} \mu_{0}\|L_{k_{0}} ((\frac{\mu}{\mu_{0}}-1)H)\|_{L^p(\Omega)}
+ \|\nabla \nabla \cdot (L_{k_{0}} ((\frac{\mu}{\mu_{0}}-1)H))\|_{L^p(\Omega)}\\
&\leq C(p, \omega)\omega \varepsilon_{0}\|\frac{\varepsilon}{\varepsilon_{0}}-1\|_{L^{\infty}(\Omega)}\|E\|_{L^p(\Omega)}\\
&\quad + C(p, \omega) \omega^2 \varepsilon_{0} \mu_{0} \|\frac{\mu}{\mu_{0}}-1\|_{L^{\infty}(\Omega)}  \|H\|_{L^p(\Omega)} + C(p) \|\frac{\mu}{\mu_{0}}-1\|_{L^{\infty}(\Omega)}  \|H\|_{L^p(\Omega)}\\
 &\leq C(p, \omega)\omega \varepsilon_{0}\|\frac{\varepsilon}{\varepsilon_{0}}-1\|_{L^{\infty}(\Omega)}\|E\|_{L^p(\Omega)}\\
& \quad + C(p, \omega)\max\{1,  \omega^2 \varepsilon_{0} \mu_{0}  \}  \|\frac{\mu}{\mu_{0}}-1\|_{L^{\infty}(\Omega)} \|H\|_{L^p(\Omega)}\\
& \leq C_{G^{2}_{-1}} \left[\|E\|_{L^p(\Omega)} + \|H\|_{L^p(\Omega)}\right],
    \end{aligned}
\end{equation}
where the constants $C_{G^{1}_{-1}}, C_{G^{2}_{-1}} >0$ are defined by
\[
C_{G^{1}_{-1}} := C(p, \omega)\max\{ \max\{1, \omega^2 \varepsilon_{0} \mu_{0}\}\|\frac{\varepsilon}{\varepsilon_{0}}-1\|_{L^{\infty}(\Omega)},\omega \mu_{0}\|\frac{\mu}{\mu_{0}}-1\|_{L^{\infty}(\Omega)} \}
\]
and
\[
C_{G^{2}_{-1}}:= C(p, \omega)\max\{\max\{1,  \omega^2 \varepsilon_{0} \mu_{0} \}  \|\frac{\mu}{\mu_{0}}-1\|_{L^{\infty}(\Omega)}, \omega \varepsilon_{0}\|\frac{\varepsilon}{\varepsilon_{0}}-1\|_{L^{\infty}(\Omega)}\}.
\]
Therefore, under the assumption \eqref{assum} on the coefficients, we have the required estimate in terms of the operator norm 
\[
\|G_{-1}\| \leq C_{G^{1}_{-1}} + C_{G^{2}_{-1}} < 1.
\]
Now it remains to show the operator norm estimate for $G_{j}$. Recall that
\[
G_{j} \begin{pmatrix}E\\ \\
 H
\end{pmatrix}(x)
=
\omega
\int_{\Omega}
G(x-y)
T^{j}(C+T) \begin{pmatrix}
\frac{\epsilon}{\epsilon_{0}}  & 0 \\ \\
0 & \frac{\mu}{\mu_{0}} 
\end{pmatrix}\begin{pmatrix}
 E(y)\\ \\
 H(y)
\end{pmatrix} dy, \ \text{for}\ j\geq 0. 
\]
We now rewrite the operator $G_{j}$ in terms of the Newtonian potential operator as follows.
For each fix $j\geq 0$, we define the vector fields as
\begin{equation}\label{fixJ}
 \begin{pmatrix}E_j(y)\\ \\
 H_j(y)
\end{pmatrix}:=
T^{j}(C+T) \begin{pmatrix}
\frac{\epsilon}{\epsilon_{0}}  & 0 \\ \\
0 & \frac{\mu}{\mu_{0}} 
\end{pmatrix}\begin{pmatrix}
 E(y)\\ \\
 H(y)
\end{pmatrix}.   
\end{equation}
Then the operator $G_{j}$ becomes
\begin{equation}\label{Gjest}
\begin{aligned}
& G_{j} \begin{pmatrix}E\\ \\
 H
\end{pmatrix}(x)\\
&=
\omega
\int_{\Omega}
G(x-y)
\begin{pmatrix}E_j(y)\\ \\
 H_j(y)
\end{pmatrix} dy\\
&=
\omega^2 \varepsilon_{0} \mu_{0}
\int_{\Omega}
\left(\begin{array}{cc}
1+\frac{\nabla \nabla \cdot}{\omega^2 \varepsilon_{0} \mu_{0}} & \frac{i}{\omega \varepsilon_{0}} \nabla \wedge \\
-\frac{i}{\omega \mu_{0}} \nabla \times & 1+\frac{\nabla \nabla \cdot}{\omega^2 \varepsilon_{0} \mu_{0}}
\end{array}\right) \Phi_{k_{0}}(x-y)
\begin{pmatrix}E_j(y)\\ \\
 H_j(y)
\end{pmatrix} dy\\
&=
\omega^2 \varepsilon_{0} \mu_{0}
\left(\begin{array}{cc}
1+\frac{\nabla \nabla \cdot}{\omega^2 \varepsilon_{0} \mu_{0}} & \frac{i}{\omega \varepsilon_{0}} \nabla \times \\
-\frac{i}{\omega \mu_{0}} \nabla \times & 1+\frac{\nabla \nabla \cdot}{\omega^2 \varepsilon_{0} \mu_{0}}
\end{array}\right)\int_{\Omega}
 \Phi_{k_{0}}(x-y)
\begin{pmatrix}E_j(y)\\ \\
 H_j(y)
\end{pmatrix} dy\\
&=
\omega^2 \varepsilon_{0} \mu_{0}
\left(\begin{array}{cc}
1+\frac{\nabla \nabla \cdot}{\omega^2 \varepsilon_{0} \mu_{0}} & \frac{i}{\omega \varepsilon_{0}} \nabla \wedge \\
-\frac{i}{\omega \mu_{0}} \nabla \times & 1+\frac{\nabla \nabla \cdot}{\omega^2 \varepsilon_{0} \mu_{0}}
\end{array}\right)
\begin{pmatrix}
 L_{k_{0}}  E_j(x)\\ \\
 L_{k_{0}}  H_j(x)
\end{pmatrix}, 
\end{aligned}
\end{equation}
where 
$
L_{k_{0}}$
stands for the Newtonian potential type operator. We denote $G^{1}_{j}$ as the first component of the vector $G_{j} \begin{pmatrix}E\\ \\
 H
\end{pmatrix}(x)$ and write 
\[
G^{1}_{j} =
\omega^2 \varepsilon_{0} \mu_{0}\left(\left[1+\frac{\nabla \nabla \cdot}{\omega^2 \varepsilon_{0} \mu_{0}}\right]L_{k_{0}} E_j(x)
+ \frac{i}{\omega \varepsilon_{0}} \nabla \times \left[ L_{k_{0}} H_j(x)\right]\right).
\]
Similarly, we denote $G^{2}_{j}$ as the second component of the vector $G_{j} \begin{pmatrix}E\\ \\
 H
\end{pmatrix}(x)$ and write
\[
G^{2}_{j} =
\omega^2 \varepsilon_{0} \mu_{0}
\left(-\frac{i}{\omega \mu_{0}} \nabla \times L_{k_{0}} E_j(x) + \left[1+\frac{\nabla \nabla \cdot}{\omega^2 \varepsilon_{0} \mu_{0}}\right]  L_{k_{0}} H_j(x)\right).
\]
To estimate the norm of $G_{j}$, we follow the similar procedure as in the case of $G_{-1}$.
Combining \eqref{Cal_Zyg_est_Hel_fun}, \eqref{Cal_nabla} and \eqref{Cal_nablanabla}, we obtain
\begin{equation}\label{Gj1est}
    \begin{aligned}
        \|G^{1}_{j} \begin{pmatrix}E\\ \\
 H
\end{pmatrix}\|_{L^p(\Omega)} 
&\leq 
\omega^2 \varepsilon_{0} \mu_{0}\|L_{k_{0}} E_j\|_{L^p(\Omega)}+\|\nabla \nabla \cdot L_{k_{0}} E_j\|_{L^p(\Omega)} \\
&\quad + \omega \mu_{0} \|\nabla \times  L_{k_{0}} H_j\|_{L^p(\Omega)}\\
&\leq C(p, \omega)\omega^2 \varepsilon_{0} \mu_{0}\|E_j\|_{L^p(\Omega)} + C(p, \omega) \|E_j\|_{L^p(\Omega)}\\
&\quad + C(p, \omega)\omega \mu_{0}  \|H_j\|_{L^p(\Omega)}\\
&\leq C(p, \omega)\max\{1, \omega^2 \varepsilon_{0} \mu_{0} \} \|E_j\|_{L^p(\Omega)} \\
&\quad + C(p, \omega)\omega \mu_{0}  \|H_j\|_{L^p(\Omega)}\\
& \leq C_{G^{1}_{j}} \left[\|E_j\|_{L^p(\Omega)} + \|H_j\|_{L^p(\Omega)}\right],
    \end{aligned}
\end{equation}
and 
\begin{equation}\label{Gj2est}
    \begin{aligned}
        \|G^{2}_{j}\begin{pmatrix}E\\ \\
 H
\end{pmatrix}\|_{L^p(\Omega) }
&\leq 
\omega \varepsilon_{0}
\|\nabla \times L_{k_{0}} E_j\|_{L^p(\Omega)}
 \\
&\quad  + \omega^2 \varepsilon_{0} \mu_{0}\|L_{k_{0}} H_j\|_{L^p(\Omega)}
+ \|\nabla \nabla \cdot L_{k_{0}} H_j\|_{L^p(\Omega)}\\
&\leq C(p, \omega)\omega \varepsilon_{0}\|E_j\|_{L^p(\Omega)}\\
&\quad + C(p, \omega) \omega^2 \varepsilon_{0} \mu_{0} \|H_j\|_{L^p(\Omega)} + C(p, \omega) \|H_j\|_{L^p(\Omega)}\\
 &\leq C(p, \omega)\omega \varepsilon_{0}\|E_j\|_{L^p(\Omega)}\\
& \quad + C(p, \omega)\max\{1,  \omega^2 \varepsilon_{0} \mu_{0}\}   \|H_j\|_{L^p(\Omega)}\\
& \leq C_{G^{2}_{j}} \left[\|E_j\|_{L^p(\Omega)} + \|H_j\|_{L^p(\Omega)}\right],
    \end{aligned}
\end{equation}
where the constants $C_{G^{1}_{j}}, C_{G^{2}_{j}} >0$ are defined by
\[
C_{G^{1}_{j}} := C(p, \omega)\max\{ \max\{1, \omega^2 \varepsilon_{0} \mu_{0} \},\omega \mu_{0}    \}
\]
and
\[
C_{G^{2}_{j}}:= C(p, \omega)\max\{\max\{1,  \omega^2 \varepsilon_{0} \mu_{0}\} , \omega \varepsilon_{0}\}.
\]
For each $j$, we have from \eqref{fixJ} that
\begin{equation}\label{operator}
\| \begin{pmatrix}E_j\\ \\
 H_j
\end{pmatrix}\|_{L^p(\Omega) }
\leq 
\|T\|_{op}^{j}\|C+T\|_{op}\|\begin{pmatrix}
\frac{\epsilon}{\epsilon_{0}}  & 0 \\ \\
0 & \frac{\mu}{\mu_{0}} 
\end{pmatrix}\|_{op}\|\begin{pmatrix}
 E\\ \\
 H
\end{pmatrix}\|_{L^p(\Omega) } 
\end{equation}
where $\|\cdot\|_{op}$ denotes the operator norm of the matrix. Note that 
\[
\|T\|_{op} = \max\{\mu_{0}\frac{\epsilon}{\epsilon_{0}}|V|,\epsilon_{0} \frac{\mu}{\mu_{0}}|V|\}, 
\]
\[
\|C+T\|_{op} = \max\{\frac{\mu_{0}}{\mu\epsilon_{0}}(\frac{1}{c_{\Omega}^{2}}-\frac{1}{c^2}) |V| ,
\frac{\epsilon_{0}}{\mu_{0}\epsilon}(\frac{1}{c_{\Omega}^{2}}-\frac{1}{c^2}) |V|\}
\]
and
\[
\|\begin{pmatrix}
\frac{\epsilon}{\epsilon_{0}}  & 0 \\ \\
0 & \frac{\mu}{\mu_{0}} 
\end{pmatrix}\|_{op} = \max\{\frac{\epsilon}{\epsilon_{0}}, \frac{\mu}{\mu_{0}}\}.
\]
Therefore the estimate \eqref{operator} becomes
\begin{equation}\label{estimate_EHj}
    \|E_j\|_{L^p(\Omega)} + \|H_j\|_{L^p(\Omega)}
    \leq \Tilde{C_j} [ \|E\|_{L^p(\Omega)} + \|H\|_{L^p(\Omega)}]
\end{equation}
where 
\[
\tilde{C_j} = \left[\max\{\mu_{0}\frac{\epsilon}{\epsilon_{0}}|V|,\epsilon_{0} \frac{\mu}{\mu_{0}}|V|\} \right]^j
\max\{\frac{\mu_{0}}{\mu\epsilon_{0}}(\frac{1}{c_{\Omega}^{2}}-\frac{1}{c^2}) |V| ,
\frac{\epsilon_{0}}{\mu_{0}\epsilon}(\frac{1}{c_{\Omega}^{2}}-\frac{1}{c^2}) |V|\}
\max\{\frac{\epsilon}{\epsilon_{0}}, \frac{\mu}{\mu_{0}}\}.
\]
Combining \eqref{Gjest}, \eqref{Gj1est}, \eqref{Gj2est} and \eqref{estimate_EHj} together with the assumption \eqref{assum}, we finally obtain the required estimate in terms of the operator norm 
\[
\|G_{j}\| \leq \tilde{C_j}[C_{G^{1}_{j}} + C_{G^{2}_{j}}] < 1.
\]
and hence the result follows.

%We also mention that, under the required assumption, the following properties hold:
%\begin{itemize}
%    \item We have
%\[
%C(p)\max\{ \max\{1, \omega^2 \|\varepsilon_{0} \mu_{0}\|_{L^{\infty}(\Omega)} \}\|\frac{\varepsilon}{\varepsilon_{0}}-1\|_{L^{\infty}(\Omega)},\omega \|\mu_{0}\|_{L^{\infty}(\Omega)}   \|\frac{\mu}{\mu_{0}}-1\|_{L^{\infty}(\Omega)} \}<\frac{1}{2}
%\]
%\item and 
%\[
%C(p)\max\{\max\{1,  \omega^2 %\|\varepsilon_{0} \mu_{0}\|_{L^{\infty}(\Omega)}  \}  \|\frac{\mu}{\mu_{0}}-1\|_{L^{\infty}(\Omega)}, \omega \|\varepsilon_{0}\|_{L^{\infty}(\Omega)}\|\frac{\varepsilon}{\varepsilon_{0}}-1\|_{L^{\infty}(\Omega)}\}<\frac{1}{2}
%\]
%where $C(p)$ is the constant appearing in \eqref{Cal_Zyg_est}.
%\item
%We also have 
%\[
%\tilde{C_j}[C_{G^{1}_{j}} + C_{G^{2}_{j}}] < 1
%\]
%where 
%\[
%C_{G^{1}_{j}} := C(p)\max\{ \max\{1, \omega^2 \|\varepsilon_{0} \mu_{0}\|_{L^{\infty}(\Omega)} \},\omega \|\mu_{0}\|_{L^{\infty}(\Omega)}    \}
%\]
%and
%\[
%C_{G^{2}_{j}}:= C(p)\max\{\max\{1,  \omega^2 \|\varepsilon_{0} \mu_{0}\|_{L^{\infty}(\Omega)}  \} , \omega \|\varepsilon_{0}\|_{L^{\infty}(\Omega)}\}
%\]
%and
%\[
%\tilde{C_j} = \left[\max\{\mu_{0}\frac{\epsilon}{\epsilon_{0}}|V|,\epsilon_{0} \frac{\mu}{\mu_{0}}|V|\} \right]^j
%\max\{\frac{\mu_{0}}{\mu\epsilon_{0}}(\frac{1}{c_{\Omega}^{2}}-\frac{1}{c^2}) |V| ,
%\frac{\epsilon_{0}}{\mu_{0}\epsilon}(\frac{1}{c_{\Omega}^{2}}-\frac{1}{c^2}) |V|\}
%\max\{\frac{\epsilon}{\epsilon_{0}}, \frac{\mu}{\mu_{0}}\}.
%\]
%\end{itemize}
\end{proof}

\section{Wellposedness of the forward problem (\ref{model_EQ})}\label{sec6}
In this section we deduce the Lippmann-Schwinger equation for the Maxwell system as
\begin{equation}\label{Lipp_spacetime}
   \begin{pmatrix}
 E-E^{i}\\ \\
 H-H^{i}
\end{pmatrix}(x)
=
\omega
\int_{\Omega}
G(x-y)
\mathcal{M} \begin{pmatrix}
 E(y)\\ \\
 H(y)
\end{pmatrix} dy.
\end{equation}
We base our analysis on \cite{Ammari96time-harmonicelectromagnetic} and \cite{ammari2017doublenegative} which deal with the electromagnetic scattering in chiral media. We first state and prove the following Lemma.
\begin{lemma}\label{representation}
Let $\Omega$ be a bounded $C^1$-smooth domain in $\mathbb{R}^3$ with $1<p<\infty$. Let $u , v \in L^p(\Omega)$ be such that $\curl\, u, \curl \,v \in L^p(\Omega)$. Then at almost any point in $\Omega$ one has
\begin{equation}
\begin{pmatrix}
u(x)\\ \\
 v(x)
\end{pmatrix}=\int_{\Omega} G(x-y)\left(\mathcal{L}_0-\omega\right)\begin{pmatrix}
u(y)\\ \\
 v(y)
\end{pmatrix} d y 
-\begin{pmatrix}
   K_{\Omega} &  \frac{i}{\omega\varepsilon_{0}} D_{\Omega} \\
- \frac{i}{\omega\mu_{0}} D_{\Omega} & K_{\Omega} 
\end{pmatrix}
\begin{pmatrix}
\nu \times u|_{\partial\Omega}\\ \\
 \nu \times v|_{\partial\Omega}
\end{pmatrix},
\end{equation}
where $\partial\Omega$ is the boundary of $\Omega$ and the integral operators $K_{\Omega}, D_{\Omega}$ are defined as
\[
K_{\Omega}f(x) = \curl \mathcal{S}_{k}f(x), \ \ x\in \Omega
\]
and 
\[
D_{\Omega}f(x) = \curl\curl \mathcal{S}_{k}f(x), \ \ x\in \Omega
\]
for any $f\in L^p_{\tan}(\partial\Omega).$
\end{lemma}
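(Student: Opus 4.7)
The plan is to establish a Stratton--Chu type representation for the block Maxwell operator $\mathcal{L}_0 - \omega$, modelled on the classical derivation for homogeneous electromagnetic media. I would first treat the case of smooth $u, v \in C^1(\overline{\Omega})$ and recover the $L^p$-case at the end by density: approximate $(u,v)$ in the $W^{1,p}_{loc}(\curl,\mathbb{R}^3)$-norm by smooth functions, and then pass to the limit using the continuity of the single-layer potential $\mathcal{S}_{k_0}$, together with its curl and curl-curl mappings from $L^{p,\operatorname{Div}}_{\tan}(\partial\Omega)$ to $L^p_{loc}(\Omega)$, as available in \cite{MR1438894, MR672839, MR501367}.

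For fixed $x \in \Omega$ and small $\varepsilon > 0$, set $\Omega_\varepsilon := \Omega \setminus \overline{B_\varepsilon(x)}$, on which $y \mapsto G(x-y)$ is smooth. The starting identity is the vector Green formula
\begin{equation*}
\int_{\Omega_\varepsilon}\bigl(\curl A \cdot B - A \cdot \curl B\bigr) \, dy = \int_{\partial \Omega_\varepsilon} (\nu \times A) \cdot B \, d\sigma,
\end{equation*}
applied column by column with $A$ a column of $G(x-\cdot)$ and $B$ a component of $(u,v)$. Each column of $G(x-\cdot)$ satisfies $(\mathcal{L}_0 - \omega) G(x-\cdot) = 0$ pointwise on $\Omega_\varepsilon$, as a consequence of $(\mathcal{L}_0 - \omega) G = \delta I$ together with translation invariance. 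Transferring the curls off $(u,v)$ therefore leaves the interior term $\int_{\Omega_\varepsilon} G(x-y)(\mathcal{L}_0 - \omega)(u,v)^T(y) \, dy$ plus two boundary contributions: an outward integral over $\partial \Omega$ and an inward integral over $\partial B_\varepsilon(x)$.

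Sending $\varepsilon \to 0^+$, the $\partial B_\varepsilon(x)$-contribution converges to $(u(x), v(x))^T$. This rests on the singular structure of $G$: the $\tfrac{\nabla \nabla \cdot}{\omega}\Phi_{k_0}$ pieces in the diagonal blocks produce a $|x-y|^{-3}$ singularity that, after spherical averaging, yields a nontrivial limit equal to the identity, while the off-diagonal $\curl$-terms scale like $|x-y|^{-2}$ and contribute vanishing surface averages; this is the computation behind the classical Stratton--Chu formula. The $\partial \Omega$-contribution, expanded using the explicit form of $G$ as $\omega\varepsilon_0\mu_0$ times derivatives of $\Phi_{k_0}$, then rearranges into $\curl \mathcal{S}_{k_0}(\nu \times u)$, $\curl \mathcal{S}_{k_0}(\nu \times v)$, $\curl\curl \mathcal{S}_{k_0}(\nu \times u)$, and $\curl\curl \mathcal{S}_{k_0}(\nu \times v)$, with the factors $\pm\tfrac{i}{\omega\varepsilon_0}, \pm\tfrac{i}{\omega\mu_0}$ appearing in exactly the pattern of the block matrix of $K_\Omega$ and $D_\Omega$ operators in the statement.

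The main obstacle will be the algebraic identification of the $\partial \Omega$-terms with $K_\Omega$ and $D_\Omega$: after integration by parts, the boundary terms naturally carry derivatives in $y$ acting on $\Phi_{k_0}(x-y)$, which must be commuted into derivatives in $x$ (with a sign flip) and pulled outside the boundary integral (in the principal-value sense where needed). One then uses the identity $\curl\curl = \nabla\nabla\cdot - \Delta$ together with the Helmholtz equation $(\Delta+k_0^2)\Phi_{k_0}=\delta$ to absorb the $\omega^2\varepsilon_0\mu_0 = k_0^2$ factors and expose the claimed $K_\Omega, D_\Omega$ structure. A secondary technical point is the passage to $L^p$-data: one must justify the continuity of both the volume term and the boundary terms when $(u,v)$ only belongs to $W^p(\curl,\Omega)$, invoking standard trace theory for this space (see \cite{MR1438894}) and the mapping properties of $K_\Omega$ and $D_\Omega$ for $x \in \Omega$.
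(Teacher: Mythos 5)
Your proposal is correct in substance but reaches the representation formula by a genuinely different route than the paper. For smooth $u,v$ you excise a small ball $B_\varepsilon(x)$, apply the vector Green identity on $\Omega\setminus\overline{B_\varepsilon(x)}$ with the columns of $G(x-\cdot)$ (which solve the homogeneous system there), and recover $(u(x),v(x))^T$ from the limit of the $\partial B_\varepsilon(x)$ integral --- the classical Stratton--Chu derivation. The paper never excises a ball: it evaluates the volume integral $\int_\Omega G(x-y)(\mathcal{L}_0-\omega)(u,v)^T\,dy$ directly, component by component, using the commutation identity $\curl L_{k_0} u = L_{k_0}(\curl u)-\mathcal{S}_{k_0}(\nu\times u)$ together with $\curl\curl=-\Delta+\nabla\nabla\cdot$ and $(\Delta+k_0^2)\Phi_{k_0}=-\delta$; the boundary operators $K_\Omega$, $D_\Omega$ then fall out of the commutation identity rather than from an explicit surface integral in a Green formula. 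Your route is more geometric and makes the origin of the jump term transparent; the paper's route avoids the delicate small-sphere limit entirely (note that your scaling heuristic there is slightly off: an $|x-y|^{-2}$ kernel integrated over $\partial B_\varepsilon(x)$ is $O(1)$, not $o(1)$ --- those terms vanish by angular cancellation, e.g.\ $\nu\times\nabla\Phi_{k_0}$ has vanishing leading part since $\nabla\Phi_0(x-y)$ is parallel to the normal of the small sphere --- while the identity contribution comes from the full combination, not from the $\nabla\nabla\cdot$ block alone).

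The one place where your plan is genuinely under-specified is the density step. Since $u$ only satisfies $u,\curl u\in L^p(\Omega)$, its tangential trace is a priori only a distribution, and interior mollification gives no control of $\nu\times u_j$ on $\partial\Omega$; ``approximate by smooth functions and pass to the limit'' does not work off the shelf. The paper handles this with the directional mollification of \cite[Theorem 3.2]{MR1438894}: after a partition of unity one chooses a cone $\Gamma$ with $\Gamma+(\partial\Omega\cap\operatorname{supp}u)\subseteq\mathbb{R}^3\setminus\overline{\Omega}$ and mollifies with $\varphi_\epsilon$ supported in $\Gamma$, so that the singular layer created by the zero extension of $u$ is pushed outside $\overline{\Omega}$ and $u*\varphi_\epsilon$, $\curl(u*\varphi_\epsilon)$ converge in $L^p(\Omega)$. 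You should either reproduce this construction or cite it explicitly; with that addition your argument closes.
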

\begin{proof}
We first prove the lemma for all functions $u, v$ that are in $C^{\infty}(\overline{\Omega})$. The first component of the vector $G(x-y)\left(\mathcal{L}_0-\omega\right)\begin{pmatrix}
u(y)\\ \\
 v(y)
\end{pmatrix}$
is 
\[
\omega\varepsilon_{0}\mu_{0}\left((1+\frac{\nabla_{x}\nabla_{x}\cdot}{\omega^2\varepsilon_{0}\mu_{0}})[\Phi_{k_{0}}(-\omega u+\frac{i}{\varepsilon_{0}}\nabla_{y}\times v)]+ (\frac{i}{\omega\varepsilon_{0}}\nabla_x\times)[\Phi_{k_{0}}(-\frac{i}{\mu_{0}}\nabla_{y}\times u - \omega v)] \right)
\]
and the second component
\[
\omega\varepsilon_{0}\mu_{0}\left((-\frac{i}{\omega\mu_{0}}\nabla_x\times)[\Phi_{k_{0}}(\frac{i}{\varepsilon_{0}}\nabla_{y}\times v + \omega u)] + (1+\frac{\nabla_{x}\nabla_{x}\cdot}{\omega^2\epsilon_{0}\mu_{0}})[\Phi_{k_{0}}(\omega v-\frac{i}{\mu_{0}}\nabla_{y}\times u)]\right).
\]
We now denote $u^{1st}$ as the first component of the vector $G(x-y)\left(\mathcal{L}_0-\omega\right)\begin{pmatrix}
u(y)\\ \\
 v(y)
\end{pmatrix}$ that contains only $u$ term. Therefore using the identities 
\begin{equation}\label{identities}
\begin{aligned}
&\curl \ L_k u =L_k(\curl u)-\mathcal{S}_k(\nu \times u), \\
 &   \curl\curl = -\Delta + \nabla\nabla\cdot ,\\
 & \Delta_x\Phi_{k_{0}} + k_{0}^{2}\Phi_{k_{0}} = -\delta,
\end{aligned}
\end{equation}
we obtain
\[
\begin{aligned}
\int_{\Omega}u^{1st}(y) dy
&=-\omega^2\varepsilon_{0}\mu_{0}\int_{\Omega}\Phi_{k_{0}}(x-y)u(y)dy -\nabla_x\nabla_x\cdot\int_{\Omega}\Phi_{k_{0}}(x-y)u(y)dy
\\&\qquad+ \nabla_x\times\int_{\Omega}\Phi_{k_{0}}(x-y)(\nabla_y\times u)(y)dy\\
& = -k_{0}^2L_{k_{0}}u(x) - \nabla_x\nabla_x\cdot L_{k_{0}}u(x) + \curl_x L_{k_{0}}(\curl_x u)(x)\\
& = -k_{0}^2L_{k_{0}}u(x) - \curl\curl L_{k_{0}}u(x) - \Delta L_{k_{0}}u(x) + \curl L_{k_{0}}(\curl u)(x)\\
& = u(x) - \curl\curl L_{k_{0}}u(x)  + \curl L_{k_{0}}(\curl u)(x)\\
& = u(x) + \curl\mathcal{S}_{k_{0}}(\nu\times u).
\end{aligned}
\]
Similarly, if we denote $v^{1st}$ as the first component of the vector $G(x-y)\left(\mathcal{L}_0-\omega\right)\begin{pmatrix}
u(y)\\ \\
 v(y)
\end{pmatrix}$ that contains only $v$ term, then using the identities \eqref{identities}, we have
\[
\begin{aligned}
&\int_{\Omega}v^{1st}(y) dy\\
&= i\omega\mu_{0}\int_{\Omega}\Phi_{k_{0}}(x-y)(\nabla_y\times v)(y) dy + \frac{i}{\omega\varepsilon_{0}}\nabla_x\nabla_x\cdot \int_{\Omega}\Phi_{k_{0}}(x-y)(\nabla_y\times v)(y) dy 
\\&\qquad- i\omega\mu_{0}\nabla_x \times\int_{\Omega}\Phi_{k_{0}}(x-y) v(y)dy\\
&=i\omega\mu_{0} L_{k_{0}}(\nabla_x\times v)(x) + \frac{i}{\omega\varepsilon_{0}}\nabla_x\nabla_x\cdot  L_{k_{0}}(\nabla_x\times v)(x) 
- i\omega\mu_{0}\nabla_x \times L_{k_{0}}(v)(x)\\
&= i\omega\mu_{0} L_{k_{0}}(\curl v)(x) 
+ \frac{i}{\omega\varepsilon_{0}} [\curl \curl L_{k_{0}}(\curl v)(x) + \Delta L_{k_{0}}(\curl v)(x)] - i\omega\mu_{0}\curl L_{k_{0}}(v)\\
& = -\frac{i}{\omega\varepsilon_{0}}\curl_x v(x) 
+\frac{i}{\omega\varepsilon_{0}} \curl_x \curl_x L_{k_{0}}(\curl_x v)(x) - i\omega\mu_{0}\curl L_{k_{0}}(v) \\
& = -\frac{i}{\omega\varepsilon_{0}}\curl_x v(x) + \frac{i}{\omega\varepsilon_{0}} [\curl\curl\curl L_{k_{0}}(v)(x) + \curl\curl\mathcal{S}_{k_{0}}(\nu\times v)]  - i\omega\mu_{0}\curl L_{k_{0}}(v) \\
&= -\frac{i}{\omega\varepsilon_{0}}\curl_x v(x) + \frac{i}{\omega\varepsilon_{0}}[\curl v(x) + k^2_{0}\curl L_{k_{0}}v] + \frac{i}{\omega\varepsilon_{0}}\curl\curl\mathcal{S}_{k_{0}}(\nu\times v) 
 - i\omega\mu_{0}\curl L_{k_{0}}(v) \\
 &=\frac{i}{\omega\varepsilon_{0}}\curl\curl\mathcal{S}_{k_{0}}(\nu\times v).
\end{aligned}
\]
On the other hand, we now denote $u^{2nd}$ as the second component of the vector $G(x-y)\left(\mathcal{L}_0-\omega\right)\begin{pmatrix}
u(y)\\ \\
 v(y)
\end{pmatrix}$ that contains only $u$ term and defined by
\[
u^{2nd}(y) =i\omega\varepsilon_{0}\nabla_x\times(\Phi_{k_{0}}u(y)) - i\omega\varepsilon_{0}\left( 1 + \frac{\nabla_x\nabla_x\cdot}{\omega^2\varepsilon_{0}\mu_{0}}\right)\Phi_{k_{0}}(\nabla_y\times u(y)).
\]
Also, we denote $v^{2nd}$ as the second component of the vector $G(x-y)\left(\mathcal{L}_0-\omega\right)\begin{pmatrix}
u(y)\\ \\
 v(y)
\end{pmatrix}$ that contains only $v$ term and defined by
\[
v^{2nd}(y)= \nabla_x\times (\Phi_{k_{0}}(\nabla_y\times v(y)))-\omega^2\varepsilon_{0}\mu_{0}\Phi_{k_{0}}v(y) - \nabla_x\nabla_x\cdot (\Phi_{k_{0}} v(y)).
\]
Then similar computations as above imply 
\[
\int_{\Omega}u^{2nd}(y)dy = -\frac{i}{\omega\mu_{0}}\curl\, \curl\, \mathcal{S}_{k_{0}}(\nu\times u).
\]
and
\[
\int_{\Omega}v^{2nd}(y)dy = v(x)+ \curl\, \mathcal{S}_{k_{0}}(\nu\times v).
\]
Therefore the result holds true for any  $u, v$ that are in $C^{\infty}(\overline{\Omega})$. To prove the general case, we apply the convolution and mollification argument as in \cite[Theorem 3.2]{MR1438894}. Let us first
extend $u$ and $\operatorname{curl} u$ with zero outside $\Omega$. Then we claim that there exists a sequence $\left(u_j\right)_j$ of functions in $C^{\infty}(\overline{\Omega})$ with $u_j$, $\operatorname{curl} u_j$ converging to $u$, $\operatorname{curl} u$ respectively in the $L^p(\Omega)$ norm. Without loss of generality using a partition of unity, we assume that supp $u \cap \partial \Omega$ lies in a coordinate patch of $\partial \Omega$. Therefore we could construct an open, upright cone $\Gamma$ centered at the origin of $\mathbb{R}^3$ such that
\begin{equation}\label{mollification}
    \Gamma+(\partial \Omega \cap \operatorname{supp} u) \subseteq \mathbb{R}^3 \backslash \overline{\Omega}.
\end{equation}
Let $\varphi$ be a smooth, compactly supported function in $\mathbb{R}^3$ having integral one such that $\operatorname{supp} \varphi \subseteq \Gamma$. Also, let $\varphi_\epsilon:=\epsilon^{-3} \varphi\left(\cdot \epsilon^{-1}\right)$ for $\epsilon>0$ be the mollifier. Clearly, $u * \varphi_\epsilon$, ($\curl \left.u\right) * \varphi_\epsilon$ are smooth in $\mathbb{R}^3$ and converge to $u$ and $\operatorname{curl} u$ respectively in $L^p(\Omega)$ norm.
Next, $\operatorname{curl} \left(u * \varphi_\epsilon\right)=(\operatorname{curl} u ) * \varphi_\epsilon \ + \ v * \varphi_\epsilon $, where $v$ is a distribution supported on $\partial \Omega \cap \operatorname{supp} u$. It follows from \eqref{mollification} that
$$
\operatorname{supp}\left(v * \varphi_\epsilon\right) \subseteq \Gamma+(\partial \Omega \cap \operatorname{supp} u) \subseteq \mathbb{R}^3 \backslash \overline{\Omega} .
$$
Consequently, we have curl $\left(u * \varphi_\epsilon\right) \rightarrow \operatorname{curl} u$ in $L^p(\Omega)$. Similar convergence arguments hold for $v$ and $\curl \,v$. We already have seen that the theorem holds for functions in $C^{\infty}(\overline{\Omega})$. Finally  the proof of the theorem is a simple consequence of the usual limiting argument.
\end{proof}
Next we prove a result related to the radiation conditions.
\begin{theorem}\label{radiatioN}
If $E^{s}$ and $H^{s}$ are scattering solutions of \eqref{model_EQ} such that it satisfies the radiations condition
\begin{equation}\label{Siver_Muller1}
  \left|E^{s}(x)-\sqrt{\frac{\mu_0}{\varepsilon_0}} H^{s}(x) \times \nu(x)\right| \leq \frac{C}{|x|^2}, \quad \text{for}\ |x| \rightarrow+\infty  
\end{equation}
where $\nu(x)=\frac{x}{|x|}$,
then we have
    \begin{equation}\label{scatt_vec}
       \lim_{R\to\infty} \begin{pmatrix}
   K_{B_R} &  \frac{i}{\omega\varepsilon_{0}} D_{B_R} \\
- \frac{i}{\omega\mu_{0}} D_{B_R} & K_{B_R} 
\end{pmatrix}
\begin{pmatrix}
\nu \times E^{s}|_{\partial B_R}\\ \\
 \nu \times H^{s}|_{\partial B_R}
\end{pmatrix} = 0.  
    \end{equation}
\end{theorem}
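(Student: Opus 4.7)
The plan is to fix $x\in\R^3$, choose $R$ large enough that $x\in B_R$, and show componentwise convergence to zero by substituting the large-$R$ asymptotics of $\Phi_{k_0}$ on $\partial B_R$ into the two surface operators, then invoking the Silver--M\"uller condition \eqref{Siver_Muller1} (together with its companion form in \eqref{Siver_Muller}) to cancel the leading-order contributions.

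For $|y|=R$ and fixed $x$, with $\nu(y)=y/R$, the standard asymptotics give
\[
\Phi_{k_0}(x-y) = O(R^{-1}), \qquad \nabla_x\Phi_{k_0}(x-y) = -ik_0\,\nu(y)\,\Phi_{k_0}(x-y) + O(R^{-2}),
\]
with an analogous Hessian expansion whose leading part is $-k_0^2\,\nu(y)\nu(y)^{\top}\Phi_{k_0}(x-y)$. Using $\curl_x[\Phi_{k_0}\mathbf{a}] = \nabla_x\Phi_{k_0}\times\mathbf{a}$ and the identity $\curl_x\curl_x[\Phi_{k_0}\mathbf{a}] = k_0^2\Phi_{k_0}\mathbf{a} + \nabla_x(\nabla_x\Phi_{k_0}\cdot\mathbf{a})$ (valid for $x\neq y$ since $-\Delta_x\Phi_{k_0} = k_0^2\Phi_{k_0}$ there), combined with $\nu\cdot(\nu\times F)=0$ which annihilates the leading part of the gradient piece inside $D_{B_R}$, I would obtain for any tangential field $F$ of size $O(R^{-1})$ on $\partial B_R$,
\[
K_{B_R}(\nu\times F)(x) = -ik_0\int_{\partial B_R}\Phi_{k_0}\,\nu\times(\nu\times F)\,ds + O(R^{-1}),
\]
\[
D_{B_R}(\nu\times F)(x) = k_0^2\int_{\partial B_R}\Phi_{k_0}\,(\nu\times F)\,ds + O(R^{-1}).
\]

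Substituting into the first entry of \eqref{scatt_vec}, expanding $\nu\times(\nu\times E^s) = \nu(\nu\cdot E^s) - E^s$, and simplifying $k_0^2/(\omega\varepsilon_0) = k_0\sqrt{\mu_0/\varepsilon_0}$, the first component becomes
\[
I_1 = ik_0\int_{\partial B_R}\Phi_{k_0}\bigl[E^s + \sqrt{\mu_0/\varepsilon_0}\,(\nu\times H^s)\bigr]\,ds - ik_0\int_{\partial B_R}\Phi_{k_0}\,\nu(\nu\cdot E^s)\,ds + O(R^{-1}).
\]
The Silver--M\"uller condition \eqref{Siver_Muller1}, rewritten via $H^s\times\nu = -\nu\times H^s$, yields $E^s + \sqrt{\mu_0/\varepsilon_0}(\nu\times H^s) = O(R^{-2})$ on $\partial B_R$, and dotting \eqref{Siver_Muller1} with $\nu$ (using $\nu\cdot(H^s\times\nu)=0$) gives $\nu\cdot E^s = O(R^{-2})$. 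Since $|\Phi_{k_0}|=O(R^{-1})$ and $|\partial B_R|=O(R^2)$, each integral in $I_1$ is $O(R^{-1})$, so $I_1\to 0$. The second entry is treated by a symmetric argument using the companion identity $\nu\times E^s - \sqrt{\mu_0/\varepsilon_0}\,H^s = O(R^{-2})$ from \eqref{Siver_Muller}.

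The only delicate point is bookkeeping of the $O(R^{-2})$ remainders in the derivative expansions of $\Phi_{k_0}$: one must verify that, when paired with the $O(R^{-1})$ densities $\nu\times E^s,\ \nu\times H^s$ and integrated over the $O(R^2)$ surface, they yield genuinely $o(1)$ contributions rather than spurious $O(1)$ ones. This scaling works out because each derivative of $\Phi_{k_0}$ beyond the leading radiating factor introduces one extra $1/R$, keeping every error term at $O(R^{-1})$.
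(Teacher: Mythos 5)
Your proof is correct and follows essentially the same route as the paper: write out each component as $\operatorname{curl}\mathcal{S}_{k_0}$ and $\operatorname{curl}\operatorname{curl}\mathcal{S}_{k_0}$ terms on $\partial B_R$ and let the Silver--M\"uller condition kill the leading-order contributions. In fact your version supplies the asymptotic expansion of $\Phi_{k_0}$ and the cancellation computation that the paper's proof merely asserts "follows" from the radiation condition (the only implicit ingredient in both arguments is the standard fact that radiating Maxwell solutions satisfy $E^{s},H^{s}=O(R^{-1})$ on $\partial B_R$, which you use to bound the remainders).
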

\begin{proof}
    The first component of 
    \[
    \begin{pmatrix}
   K_{B_R} &  \frac{i}{\omega\varepsilon_{0}} D_{B_R} \\
- \frac{i}{\omega\mu_{0}} D_{B_R} & K_{B_R} 
\end{pmatrix}
\begin{pmatrix}
\nu \times E^{s}|_{\partial B_R}\\ \\
 \nu \times H^{s}|_{\partial B_R}
\end{pmatrix}
    \]
    can be written as
    \[
    \curl\int_{\partial B_R}\Phi_{k_{0}}(x-y)(\nu \times E^{s}(y)) ds(y) 
    + \frac{i}{\omega\varepsilon_{0}} \curl\, \curl \int_{\partial B_R}\Phi_{k_{0}}(x-y)(\nu \times H^{s}(y)) ds(y). 
    \]
    Note that $(E^{s}, H^{s})$ satisfy the Maxwell equations and also satisfy the Silver-M\"uller radiation condition \eqref{Siver_Muller1}. Therefore, it follows from that
     \[
    \curl\int_{\partial B_R}\Phi_{k_{0}}(x-y)(\nu \times E^{s}(y)) ds(y) 
    + \frac{i}{\omega\varepsilon_{0}} \curl\, \curl \int_{\partial B_R}\Phi_{k_{0}}(x-y)(\nu \times H^{s}(y)) ds(y)\rightarrow 0, 
    \]
    when $R$ goes to infinity and then the first component of the vector \eqref{scatt_vec} goes to zero. The second component of the vector \eqref{scatt_vec} can be treated similarly.
\end{proof}
\begin{proof}[Proof of Proposition \ref{Lipp_s1}]
Let $R>0$ be such that $\overline{\Omega} \subset B_R=\{x\in\mathbb{R}^3$ such that $|x|<R\}$.
We now apply the Lemma \ref{representation} in $B_R$ and replacing $u, v$ by $E-E^{i}, H-H^{i}$ respectively, we obtain
\begin{equation}
\begin{aligned}
\begin{pmatrix}
E-E^{i}\\ \\
 H-H^{i}
\end{pmatrix}(x)
&=\int_{B_R} G(x-y)\left(\mathcal{L}_0-\omega\right)\begin{pmatrix}
(E-E^{i})(y)\\ \\
 (H-H^{i})(y)
\end{pmatrix} d y \\
&\qquad -\begin{pmatrix}
   K_{B_R} &  \frac{i}{\omega\varepsilon_{0}} D_{B_R} \\
- \frac{i}{\omega\mu_{0}} D_{B_R} & K_{B_R} 
\end{pmatrix}
\begin{pmatrix}
\nu \times (E-E^{i})|_{\partial B_R}\\ \\
 \nu \times (H-H^{i})|_{\partial B_R}
\end{pmatrix}.
\end{aligned}
\end{equation}
Here the scattered fields $E^{s}$ and $H^{s}$ are defined as $E^{s}=E-E^{i}$ and $H^{s}=H-H^{i}$ and the incident fields $E^{i}, H^{i}$ satisfy the Maxwell system in $\mathbb{R}^3$. Since  $\epsilon =\epsilon_{0}$, $\mu = \mu_{0}$ and $c=c_{\Omega}$ in $\mathbb{R}^3\setminus\overline{\Omega}$, we can simply take $\mathcal{M}=0$ outside $\Omega$. Therefore, by Theorem \ref{radiatioN}, for $x\in\Omega$ we have
\begin{equation}
\begin{pmatrix}
E-E^{i}\\ \\
 H-H^{i}
\end{pmatrix}(x)
=\omega\int_{\Omega} G(x-y)\mathcal{M}\begin{pmatrix}
E(y)\\ \\
 H(y)
\end{pmatrix} d y. 
\end{equation}
Hence the result follows.
\end{proof}
\begin{proof}[Proof of Proposition \ref{well_posed}]
We have shown that any solution of the original problem \eqref{model_EQ} is a solution of the Lippmann Schwinger equation \eqref{Lipp_spacetime}. Now conversely any solution of \eqref{Lipp_spacetime} in $L^p(\Omega)$ can be extended to the whole space $\mathbb{R}^3$ (as $E$ and $H$), for $x\in \mathbb{R}^3$,
\begin{equation}
\begin{pmatrix}
E\\ \\
 H
\end{pmatrix}(x)
= \begin{pmatrix}
E^{i}\\ \\
 H^{i}
\end{pmatrix}(x) + \omega\int_{\Omega} G(x-y)\mathcal{M}\begin{pmatrix}
E(y)\\ \\
 H(y)
\end{pmatrix} d y. 
\end{equation}
As $\mathcal{L}_{0}-\omega = \delta$, then
\begin{equation}\label{eq111}
  (\mathcal{L}_{0}-\omega)  \begin{pmatrix}
E\\ \\
 H
\end{pmatrix}
=
(\mathcal{L}_{0}-\omega)\begin{pmatrix}
E^{i}\\ \\
 H^{i}
\end{pmatrix} + \omega \mathcal{M}\begin{pmatrix}E\\ \\
 H
\end{pmatrix}.
\end{equation}
Stating the Lippmann Schwinger system of equations in $\Omega$, we know that $E$ and $H$ are in $L^p(\Omega), p>1$. Therefore \eqref{eq111} implies that $E$ and $H$ are in $W^{1, p}_{loc}(\curl, \mathbb{R}^3).$
Hence
\begin{equation}\label{eq2}
    [E \wedge \nu]|_{\partial \Omega} =  [H \wedge \nu]|_{\partial \Omega} = 0.
\end{equation}
In addition as $G(x,y)\xi$, for any constant vector $\xi$ with respect to $x,$ satisfies the radiation condition, uniformly for $y\in \Omega$, then 
\begin{equation}\label{eq3}
     \begin{pmatrix}
E\\ \\
 H
\end{pmatrix}
-
\begin{pmatrix}
E^{i}\\ \\
 H^{i}
\end{pmatrix} \ \text{also satisfies the same radiation condition.}
\end{equation}
Then \eqref{eq111}, \eqref{eq2} and \eqref{eq3} imply that 
$\begin{pmatrix}
E\\ \\
 H
\end{pmatrix}$
is a solution of the original problem \eqref{model_EQ}. 
\end{proof}
\begin{proof}[Proof of Theorem \ref{Main_thm_direct}]
    The proof is a simple consequence of Proposition \ref{Lipp_s1} and Proposition \ref{well_posed}. In particular, we have
    \[
    \|E\|_{L^p(\Omega)} + \|H\|_{L^p(\Omega)} \leq C \left[ \|E^i\|_{L^p(\Omega)} + \|H^i\|_{L^p(\Omega)}\right].
    \]
    Since $E^i$ and $H^i$ satisfy the Maxwell system in $\mathbb{R}^3$, it follows from \cite[Theorem 11.6]{MR1438894} and the fact that the embedding $L^p(\partial \Omega) \rightarrow W^{-\frac{1}{p}, p}(\partial \Omega)$ is bounded, that
    \begin{align*}
     \|E\|_{L^p(\Omega)} + \|H\|_{L^p(\Omega)}
     &\leq C \left[ \|E^i\|_{L^p(\Omega)} + \|H^i\|_{L^p(\Omega)}\right] \\
    & \leq 
     C\left[\|\nu \times E^{i}\|_{L_{\tan }^{p, \operatorname{Div}}(\partial \Omega)}+\|\nu \times H^{i}\|_{L_{\tan }^{p, \operatorname{Div}}(\partial \Omega)}\right].        
    \end{align*}   
Similarly we also get the following estimate 
\begin{align*}
 \left\|E^{s}\right\|_{L_{loc}^{p}( \mathbb{R}^3 \backslash \overline{\Omega})}+\left\|H^{s}\right\|_{L_{loc}^{p}(\mathbb{R}^3 \backslash \overline{\Omega})}
& \preceq \left[ \|E^i\|_{L^p( \mathbb{R}^3 \backslash \overline{\Omega})} + \|H^i\|_{L^p( \mathbb{R}^3 \backslash \overline{\Omega})}\right] \\
&\leq C\left[\|\nu \times E^{i}\|_{L_{\tan }^{p, \operatorname{Div}}(\partial \Omega)}+\|\nu \times H^{i}\|_{L_{\tan }^{p, \operatorname{Div}}(\partial \Omega)}\right].   
\end{align*}

\end{proof}
%\begin{proof}[Proof of Proposition \ref{well_posed}]
\begin{remark}
Note that Theorem \ref{Main_thm_direct} could still be valid for 
 a Lipschitz domain under some restrictions on the range of $p$.  
\end{remark}

%\end{proof}

%[Then \eqref{model_EQ} $<=>$ Lippmann Schwinger equation.]
%As we have shown that the Lippmann Schwinger equation is invertible under CONDITIONS, then \eqref{model_EQ} is also uniquely solvable under the same condition. 

%Therefore, we should stop here. We don't need to derive the uniqueness of \eqref{model_EQ}. Besides the Rellich argument is quite difficult to apply to \eqref{model_EQ} due to the presence of $V\chi_{\Omega}$ (i.e. we lack positivity !!)

%The usual procedure in scattering theory is as follows. We show Fredholm alternative and uniqueness. Here, under CONDITIONS, we show invertibility. Therefore, it is enough.

%\begin{proof}(Proof of Proposition \ref{Main_thm_direct}:)
 %   The idea of proof of this proposition is to write $E, H$ as a combination of single and double layer potentials. Then using the transmission condition, one needs to find the unknown in terms of $f,g$. I am not sure whether these single and double layer potential satisfy the Maxwell system under moving dielectrics in the interior domain. We can follow similar approach as in \cite{MR1642603}.
%\end{proof}

\section{Inverse problem }\label{sec7}
In this section, we discuss the inverse scattering problem related to moving dielectric, at a constant speed, corresponding to the Maxwell system. 
\begin{proof}[Proof of Theorem \ref{inverse_sca}]
We outline the proof based on the well known contradiction argument proposed by Isakov \cite{MR2193218}.
Let $\Omega_1$ and $\Omega_2$ be two domains generating the same electric farfields $E_{1}^{\infty}(\hat{x},\theta; p) = E_{2}^{\infty}(\hat{x},\theta; p)$ for all incident and observation direction $\theta$ and $\hat{x} \in \mathbb{S}^2$ with one polarization direction $p$. Recall that the electric incident wave has the form\footnote{Here we omitted the multiplicative constant $\frac{i}{k_0}$, see (\ref{E_inci}). Due to linearity, this constant is irrelevant for the sequel.}
\begin{equation}\label{E_inci-inverse-problem-section}
  \begin{aligned}
E^{i}(x ; d, p) = \operatorname{curl} \operatorname{curl}\left[p e^{i k_{0}\langle x, d\rangle}\right] \quad x \in \mathbb{R}^3.
\end{aligned}  
\end{equation}
Let $K$ by any smooth and bounded domain containing $\Omega_1 \cup \Omega_2$.  Recall also the Herglotz operator: ${\bf{H}}_K: \mathbb{L}^2(\mathbb{S}^2) \rightarrow \mathbb{L}^2(\partial K)$ defined by ${\bf{H}}_K(g)(x):=\int_{\mathbb{S}^2}e^{i k_{0}\langle x, d\rangle}g(d)ds(d)$, $x \in \partial K$. It is known that this operator has a dense range. Therefore, for $z \in K^c$, we can find a sequences $(g_{z, n})_{n \in \mathbb{N}}$ such that ${\bf{H}}_K(g_{z, n})$ converges to $\frac{e^{ik_0 \langle x, z\rangle}}{4\pi \vert x-z\vert}$ on $\mathbb{L}^2(\partial K)$. As both ${\bf{H}}_K(g_{z, n})$ and $\frac{e^{ik_0 \langle x, z\rangle}}{4\pi \vert x-z\vert}$  satisfy the same boundary value problem given by the Helmoholtz equation in $K$, then ${\bf{H}}_K(g_{z, n})$ converges to $\frac{e^{ik_0 x\cdot z}}{4\pi \vert x-z\vert}$ inside $K$ with all the derivatives. We deduce that $ \operatorname{curl} \operatorname{curl} {\bf{H}}_K(p g_{z, n})$ converges to $ \operatorname{curl} \operatorname{curl} \left[ p \frac{e^{ik_0 \langle x, z\rangle}}{4\pi \vert x-z\vert}\right]= G(x, z)p$ in $ K$. This last convergence occurs also on $\partial \Omega_j, j=1,2$. By the wellposedness of the exterior problems for the Maxwell system in $\mathbb{\R}^3\setminus\overline{\Omega_j}, j=1,2$, we deduce the convergence of their corresponding far-fields. As $E_{1}^{\infty}(\hat{x},\theta; p) = E_{2}^{\infty}(\hat{x},\theta; p)$ for all incident and observation direction $\theta$ and $\hat{x} \in \mathbb{S}^2$ with one polarization direction $p$, then $\Omega_1$ and $\Omega_2$ generate the same far-fields $G_{1}^{\infty}(\hat{x}, z) = G_{2}^{\infty}(\hat{x}, z)$ corresponding to point sources $G(x,z) p$ as incident fields for source points $z\in K^c$ and, as $K$ is taken arbitrary smooth domain containing $\Omega_1\cup \Omega_2$, hence for $z\in (\Omega_1\cup \Omega_2)^c$. Therefore, by standard arguments using Rellich lemma and the unique continuation property satisfied by the Maxwell system outside $\Omega_1 \cup \Omega_2$, the scattered fields are also the same for $x\in (\Omega_{1} \cup \Omega_{2})^{c},$ i.e.,
\begin{equation}\label{tota_sam}
G_{1}^{s}(x,z) = G_{2}^{s}(x,z), \ x, z \in (\Omega_{1} \cup \Omega_{2})^{c}.
\end{equation}
To avoid introducting new notations, we set, in the rest of the proof, the incident waves as 
\begin{equation}\label{Point-source}
E^{i}(x, z) := \, (G(x,z) p) \mbox{ and } H^{i}(x, z) = \frac{1}{\mu \omega} \curl \, E^{i}(x, z)
\end{equation}
with the corresponding scattered fields $E^s_j(\cdot, z), H^s_j(\cdot, z)$ and total fields $E^t_j(\cdot, z):=E^t_j(\cdot, z) + E^s_j(\cdot, z)$ and $H^t_j(\cdot, z):=H^t_j(\cdot, z) + H^s_j(\cdot, z)$. Hence (\ref{tota_sam}) translates as 
\begin{equation}\label{tota_sam-E-notation}
E_{1}^{s}(x,z) = E_{2}^{s}(x,z), \ x, z \in (\Omega_{1} \cup \Omega_{2})^{c}.
\end{equation}

With these notations in \eqref{Lipp_spacetime}, where we recall that $G$ is the fundamental solution for the Maxwell system as in \eqref{fund_soll}, we have
\begin{equation}\label{1Lipp_spacetime}
   \begin{pmatrix}
 E_{j}^{t}(\cdot,z)-E^{i}\\ \\
 H_{j}^{t}(\cdot,z)-H^{i}
\end{pmatrix}(x)
=
\omega
\int_{\Omega_{j}}
G(x-y)
\mathcal{M} \begin{pmatrix}
 E_{j}^{t}(y)\\ \\
  H_{j}^{t}(y)
\end{pmatrix} dy,
\end{equation}
where $E_{j}^{t}$ and  $H_{j}^{t}$ are the total fields. Throughout this section, we denote the fields $E_{j}^{s}, H_{j}^{s}$, for $j=1,2$ by the scattered fields corresponding to the initial fields $E^i, H^i$ as given in \eqref{Point-source}.
Suppose that $\Omega_1 \ne \Omega_2$. We take a sequence $(z_n)_{n \in \mathbb{N}}\subset \mathbb{R}^3\setminus(\overline{\Omega_1} \cup \overline{\Omega_2})$ such that $z_n \rightarrow z_0 \in {\partial \Omega_1}$ and $z_0 \in \overline{\Omega_2}^c$. From the well posedness of \eqref{1Lipp_spacetime}, we can control $\Vert E_2^t(\cdot, z_n)\Vert_{L^2(\Omega_2)}$ in terms of $\Vert E^i(\cdot, z_n)\Vert_{L^2(\Omega_2)}$ which is bounded in terms of $z_n \in \overline{\Omega_2}^c$. Plugging this into the right hand side of \eqref{1Lipp_spacetime},  we see that the left hand side of \eqref{1Lipp_spacetime}, i.e.
\[
\begin{pmatrix}
 E_{2}^{s}(x,z_n)\\ \\
 H_{2}^{s}(x,z_n)
\end{pmatrix},
\]
is bounded for $x \in (\overline{\Omega}_2)^{c}$ and hence for $x$ given by the points of the sequence $(z_n)_{n \in \mathbb{N}}$.
On the other hand Lemma \ref{blowup}, see below, implies that
\[
\begin{pmatrix}
 E^s_{1}(z_n,z_n)\\ \\
 H^s_{1}(z_n,z_n)
\end{pmatrix}
\]
is unbounded, which contradicts \eqref{tota_sam}. Hence $\Omega_1$ must be identical with $\Omega_2.$
\end{proof}

\begin{lemma}\label{blowup}
The sequence
  \[
\begin{pmatrix}
 E^s_{1}(z_n,z_n)\\ \\
 H^s_{1}(z_n,z_n)
\end{pmatrix}
\]   
is unbounded.   
\end{lemma}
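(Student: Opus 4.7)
The plan is to use the Lippmann--Schwinger representation from Proposition \ref{Lipp_s1} at the coincidence point $x = z_n$ together with reciprocity of the free-space Maxwell Green tensor, and then extract a divergent leading term from the fact that the source $z_n$ of the point-source incident field approaches the boundary $\partial \Omega_1$. Concretely, the Lippmann--Schwinger identity applied to $\Omega_1$ gives
\begin{equation*}
\begin{pmatrix} E^s_1 \\ H^s_1 \end{pmatrix}(z_n, z_n) \;=\; \omega \int_{\Omega_1} G(z_n - y)\, \mathcal{M}_1 \begin{pmatrix} E^t_1 \\ H^t_1 \end{pmatrix}(y, z_n)\, dy,
\end{equation*}
and pairing with the polarization $p$, reciprocity of $G$ (namely $G(z_n - y)^T$ playing the role of $G(y - z_n)$ with the standard electromagnetic transposition) identifies the kernel $y \mapsto G(z_n - y)^T p$ with the incident field $E^i(y, z_n) = G(y, z_n)p$. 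Hence
\begin{equation*}
p \cdot E^s_1(z_n, z_n) \;=\; \omega \int_{\Omega_1} E^i(y, z_n) \cdot \mathcal{M}_1 E^t_1(y, z_n)\, dy \;+\; (\text{analogous } H\text{-contribution}).
\end{equation*}

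Next I would split $E^t_1 = E^i + E^s_1$ inside $\Omega_1$, producing a leading quadratic form $\omega \int_{\Omega_1} E^i \cdot \mathcal{M}_1 E^i\, dy$ and a cross term. Under Assumption \ref{assum}, $\mathcal{M}_1$ is a small perturbation (in powers of $|V|/c$) of the contrast matrix $\mathrm{diag}(\tfrac{\varepsilon_1}{\varepsilon_0} - 1, \tfrac{\mu_1}{\mu_0} - 1)$, so the quadratic form is bounded below by $c\, \|E^i(\cdot, z_n)\|_{L^2(\Omega_1)}^2$ for some $c > 0$. Since $G$ contains the principal singular piece $\nabla\nabla\cdot \Phi_{k_0}$ of pointwise size $|y - z_n|^{-3}$, a direct computation in a half-space model at the boundary point $z_0$ yields
\begin{equation*}
\|E^i(\cdot, z_n)\|_{L^2(\Omega_1)}^2 \;\gtrsim\; \delta_n^{-3}, \qquad \delta_n := \operatorname{dist}(z_n, \partial \Omega_1) \to 0,
\end{equation*}
so the quadratic piece diverges as $z_n \to z_0$.

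The cross term $\omega \int_{\Omega_1} E^i \cdot \mathcal{M}_1 E^s_1\, dy$ must then be shown to be of strictly lower order. A crude Cauchy--Schwarz estimate bounds it by $C\|E^i\|_{L^2(\Omega_1)} \cdot \|E^s_1\|_{L^2(\Omega_1)}$, and well-posedness (Theorem \ref{Main_thm_direct}) allows $\|E^s_1\|_{L^2(\Omega_1)}$ to grow at the same rate in $\delta_n$ as $\|E^i\|_{L^2(\Omega_1)}$; this is the main obstacle. To overcome it, I would isolate the most singular part $E^{i,\mathrm{sing}}(\cdot, z_n)$ of $E^i$ (supported in a shrinking neighborhood of $z_n$ meeting $\Omega_1$) and treat the smooth remainder by the uniform estimates of Theorem \ref{Main_thm_direct}; for the singular part, interior elliptic regularity of $E^s_1$ on the $\Omega_1$-side away from the singular support of $E^i$, combined with a local pointwise bound for $E^s_1$ near but not at $z_n$, gives a cross-term estimate of order $\delta_n^{-3+\alpha}$ for some $\alpha > 0$. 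Comparing with the leading $\delta_n^{-3}$ term then forces $|p \cdot E^s_1(z_n, z_n)| \to \infty$, which contradicts the boundedness assumption and establishes the lemma.
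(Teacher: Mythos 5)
Your overall strategy diverges from the paper's, and it contains a gap that I do not think can be repaired as stated. The paper does not pair with $p$ or split $E^t_1=E^i+E^s_1$; instead it splits the Lippmann--Schwinger output into $I_1+I_2$, where $I_2=\sum_{j\ge0}G_j(E^t_1,H^t_1)$ collects all velocity-dependent terms, and further splits $I_1=A+B$ by comparing with the solution $(\widetilde E_1^{\,t},\widetilde H_1^{\,t})$ of the \emph{stationary} ($V=0$) problem. The divergent lower bound comes entirely from $B=(\widetilde E_1^{\,s},\widetilde H_1^{\,s})$, i.e.\ the scattered field of the dielectric at rest, for which the paper flattens the boundary and uses the explicit two-phase half-space Green tensor $G^0_{\varepsilon,\mu}(x,z)=G(x,z)-G(x,z^*)$ to get $\vert\widetilde E_1^{\,s}(z_n,z_n)\vert+\vert\widetilde H_1^{\,s}(z_n,z_n)\vert\ge c\,d^{-3}(z_n,\Omega_1)$. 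The terms $A$ and $I_2$ are then shown to be $O(d^{-3})$ with constants proportional to $\vert V\vert/c_\Omega$, so they cannot cancel the leading term under Assumption \ref{assum}. Note that the smallness the hypotheses provide is smallness in $\vert V\vert/c$, not smallness of the scattered field relative to the incident field; the paper's decomposition is designed to exploit exactly that.

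The gap in your argument is the cross term. You need $\omega\int_{\Omega_1}E^i\cdot\mathcal{M}_1E^s_1\,dy=O(\delta_n^{-3+\alpha})$ for some $\alpha>0$, but this is false in general: by the transmission conditions, the scattered field inside $\Omega_1$ near $z_0$ behaves like $(T-1)G(\cdot,z_n)$ for a nontrivial transmission coefficient $T$ (this is precisely the reflected-image structure the paper computes), so $\Vert E^s_1(\cdot,z_n)\Vert_{L^2(\Omega_1)}\sim\delta_n^{-3/2}\sim\Vert E^i(\cdot,z_n)\Vert_{L^2(\Omega_1)}$ and the cross term is generically of the \emph{same} order $\delta_n^{-3}$ as your leading quadratic term. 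Interior regularity away from $z_n$ does not help, because the $\delta_n^{-3}$ mass of both factors concentrates in the same shrinking neighborhood of $z_0$. Whether the sum blows up therefore depends on the relative sign and phase of the two contributions, which is exactly the information the explicit half-space computation supplies and which your argument never accesses. A secondary issue: the pairing you obtain from reciprocity is bilinear, not sesquilinear, so $\int E^i\cdot\mathcal{M}_1E^i\,dy$ is not a priori comparable to $\Vert E^i\Vert^2_{L^2(\Omega_1)}$; this can be salvaged for the leading real singularity $\nabla\nabla\cdot\Phi_0$ when $\varepsilon>\varepsilon_0$ strictly, but it fails if the contrast sits only in $\mu$, whereas the paper's vector-valued lower bound covers both cases.
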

From now on, we provide the proof of Lemma \ref{blowup}. 
\begin{proof}
Recall that 
\[
\mathcal{M} = K - \sum_{j=0}^{\infty} G_{-j}
\]
where $K := I - G_{-1}$.
We write \eqref{1Lipp_spacetime} for $\Omega_1$ in the following way
\[
\begin{split}
    \begin{pmatrix}
 E_{1}^{t}-{E}_1^{i}\\ \\
 {H}_1^{t}-{H}_1^{i}
\end{pmatrix}(x)
& =
\omega
\int_{\Omega_{1}}
G(x-y)
{\mathcal{M}} \begin{pmatrix}
 {E}_1^{t}(y)\\ \\
 {H}_1^{t}(y)
\end{pmatrix} dy \\
&= G_{\mathcal{M}} \begin{pmatrix}
 {E}_1^{t}\\ \\
 {H}_1^{t}
\end{pmatrix}(x) \\
&= (G_{-1} + G_{0}+ G_{1}+ \cdots)\begin{pmatrix}
 {E}_1^{t}\\ \\
 {H}_1^{t}
\end{pmatrix}(x) \\
& = [G_{-1}+ \sum_{j=0}^{\infty}G_j]\begin{pmatrix}
 {E}_1^{t}\\ \\
 {H}_1^{t}
\end{pmatrix}(x)
= I_1 + I_2.
\end{split}
\]
Note that ${E}_1^{t} = {E}_1^{i} + {E}_1^{s}$, where ${E}_1^{t}$ is total field, ${E}_1^{i}$ the incident field and ${E}_1^{s}$ scattering field corresponding to the electric fields for the domain $\Omega_1$. Similarly, ${H}_1^{t} = {H}_1^{i} + {H}_1^{s}$, where ${H}_1^{t}$ is total field, ${H}_1^{i}$ the incident field and ${H}_1^{s}$ scattering field corresponding to the magnetic fields for the domain $\Omega_1$.
Now
\[
\begin{split}
    I_1(x, z_n):= G_{-1}\begin{pmatrix}
 {E}_1^{t}\\ \\
 {H}_1^{t}
\end{pmatrix}(x, z_n) 
& =\omega\int_{\Omega_1}G(x-y) \begin{pmatrix}
\frac{\varepsilon}{\varepsilon_{0}}-1 & 0 \\ \\
0 & \frac{\mu}{\mu_{0}}-1
\end{pmatrix}\begin{pmatrix}
 {E}_1^{t}\\ \\
 {H}_1^{t}
\end{pmatrix}(y, z_n)dy \\
& =\omega\int_{\Omega_1}G(x-y) \widetilde{\mathcal{M}} \begin{pmatrix}
 {E}_1^{t}\\ \\
 {H}_1^{t}
\end{pmatrix}(y, z_n)dy.
\end{split}
\]
To prove this lemma, we first claim that
\begin{equation}\label{claim1}
  I_1(z_n, z_n) \geq \frac{C}{[d(z_n, \Omega_1)]^{3}},  
\end{equation}
where $C>0$ is a constant. To estimate \eqref{claim1}, we proceed as follows:
Let us introduce $\widetilde{E}_1^{t}$ and $\widetilde{H}_1^{t}$ as the solutions of the Maxwell system when the velocity $\vert V \vert$ is zero, that is $\widetilde{E}_1^{t}$ and $\widetilde{H}_1^{t}$ satisfy 
\[
\begin{cases}\curl \ \widetilde{E}_1^{t}=i\omega\mu_{0} \widetilde{H}_1^{t} & \text { in } {\Omega}_{1} \\ \curl \ \widetilde{H}_1^{t}=-i\omega\epsilon_{0} \widetilde{E}_1^{t} & \text { in } {\Omega}_{1}
\end{cases}
\]
where 
\[
\widetilde{\mathcal{M}} = \begin{pmatrix}
\frac{\varepsilon}{\varepsilon_{0}}-1 & 0 \\ \\
0 & \frac{\mu}{\mu_{0}}-1
\end{pmatrix}.
\]
The corresponding Lippmann-Schwinger equation is given by
\begin{equation}\label{lipp_sc}
\begin{pmatrix}
 {\widetilde{E}_1}^{t}-{\widetilde{E}_1}^{i}\\ \\
 {\widetilde{H}_1}^{t}-{\widetilde{H}_1}^{i}
\end{pmatrix}(x)
=
\omega
\int_{\Omega_{1}}
G(x-y)
\widetilde{\mathcal{M}} \begin{pmatrix}
 {\widetilde{E}_1}^{t}(y)\\ \\
 {\widetilde{H}_1}^{t}(y)
\end{pmatrix} dy
= G_{-1} \begin{pmatrix}
 {\widetilde{E}_1}^{t}\\ \\
 {\widetilde{H}_1}^{t}
\end{pmatrix}(x).
\end{equation}
Here the total fields ${\widetilde{E}_1}^{t} = {\widetilde{E}_1}^{s} + {\widetilde{E}_1}^{i}$, and ${\widetilde{H}_1}^{t} = {\widetilde{H}_1}^{s} + {\widetilde{H}_1}^{i}$, where ${\widetilde{E}_1}^{s}$ and ${\widetilde{H}_1}^{s}$ represent electric and magnetic scattering fields respectively, and ${\widetilde{E}_1}^{i}$ and ${\widetilde{H}_1}^{i}$ represent electric and magnetic incident fields respectively. The equation \eqref{lipp_sc} can be simplified as
\[
(I-G_{-1}) \begin{pmatrix}
 {\widetilde{E}_1}^{t}\\ \\
 {\widetilde{H}_1}^{t}
\end{pmatrix}(x) =  \begin{pmatrix}
 {\widetilde{E}_1}^{i}\\ \\
 {\widetilde{H}_1}^{i}
\end{pmatrix}(x),
\]
which is precisely
\[
K \begin{pmatrix}
 {\widetilde{E}_1}^{t}\\ \\
 {\widetilde{H}_1}^{t}
\end{pmatrix}(x) =  \begin{pmatrix}
 {\widetilde{E}_1}^{i}\\ \\
 {\widetilde{H}_1}^{i}
\end{pmatrix}(x).
\]
We write 
\begin{align*}
 I_1(x, z_n) 
 &=   \omega\int_{\Omega_1}G(x-y)\widetilde{\mathcal{M}}\begin{pmatrix}
 {E}_1^{t}-{\widetilde{E}_1}^{t}\\ \\
 {E}_1^{t}-{\widetilde{H}_1}^{t}
\end{pmatrix}(y, z_n)dy 
+ \omega\int_{\Omega_1}G(x-y)\widetilde{\mathcal{M}}\begin{pmatrix}
{\widetilde{E}_1}^{t}\\ \\
 {\widetilde{H}_1}^{t}
\end{pmatrix}(y, z_n)dy \\
& = A(x, z_n)+B(x, z_n).
\end{align*}

To estimate the term $A$, we proceed as follows:
\[
\begin{split}
    A(x, z_n) = \omega\int_{\Omega_1}G(x-y)\widetilde{\mathcal{M}}\begin{pmatrix}
 {E}_1^{t}-{\widetilde{E}_1}^{t}\\ \\
 {H}_1^{t}-{\widetilde{H}_1}^{t}
\end{pmatrix}(y, z_n)dy
=G_{-1}\begin{pmatrix}
 {E}_1^{t}-{\widetilde{E}_1}^{t}\\ \\
 {H}_1^{t}-{\widetilde{H}_1}^{t}
\end{pmatrix}.
\end{split}
\]
Observe that $A$ satisfies 
\begin{equation}\label{A-problem}
(\mathcal{L}_0 - \omega)A=\omega \chi_{\Omega_1} \widetilde{\mathcal{M}}\begin{pmatrix}
 {E}_1^{t}-{\widetilde{E}_1}^{t}\\ \\
 {H}_1^{t}-{\widetilde{H}_1}^{t}
\end{pmatrix} \mbox{ in } ~~ \mathbb{R}^3.
\end{equation}
Let $\Omega$ be a smooth domain strictly including $\Omega_1$. Similar to the estimates in Theorem \ref{opera_est}, we have
\[
\begin{split}
\|A\|_{L^p(\Omega)}
& = \|G_{-1}\begin{pmatrix}
 {E}_1^{t}-{\widetilde{E}_1}^{t}\\ \\
 {H}_1^{t}-{\widetilde{H}_1}^{t}
\end{pmatrix}\|_{L^p(\Omega_1)} \\
& \leq (C_{G_{-1}^{1}} + C_{G_{-1}^{2}}) [\|{E}_1^{t}-{\widetilde{E}_1}^{t}\|_{L^p(\Omega_1)} + \|{H}_1^{t}-{\widetilde{H}_1}^{t}\|_{L^p(\Omega_1)}],
\end{split}
\]
where $(C_{G_{-1}^{1}} + C_{G_{-1}^{2}})<1$. 
Remark that
\[
(I-\sum_{j=-1}^{\infty}G_j)\begin{pmatrix}
 {E}_1^{t}\\ \\
 {H}_1^{t}
\end{pmatrix}
=\begin{pmatrix}
 {E}_1^{i}\\ \\
 {H}_1^{i}
\end{pmatrix}
\]
and
\[
(I-G_{-1})\begin{pmatrix}
 {\widetilde{E}_1}^{t}\\ \\
 {\widetilde{H}_1}^{t}
\end{pmatrix}
=\begin{pmatrix}
 {\widetilde{E}_1}^{i}\\ \\
{\widetilde{H}_1}^{i}
\end{pmatrix}.
\]
Since ${E}_1^{i}= {\widetilde{E}_1}^{i}$ and ${H}_1^{i} = {\widetilde{H}_1}^{i}$,
we have
\[
(I-G_{-1})\begin{pmatrix}
 {E}_1^{t}- {\widetilde{E}_1}^{t}\\ \\
 {H}_1^{t} - {\widetilde{H}_1}^{t}
\end{pmatrix}
=\sum_{j=0}^{\infty}G_j\begin{pmatrix}
 {E}_1^{t}\\ \\
 {H}_1^{t}
\end{pmatrix},
\]
which implies
\[
\begin{split}
    \begin{pmatrix}
 {E}_1^{t}- {\widetilde{E}_1}^{t}\\ \\
 {H}_1^{t} - {\widetilde{H}_1}^{t}
\end{pmatrix}
=\left( (I-G_{-1})^{-1}\sum_{j=0}^{\infty}G_j\right)\begin{pmatrix}
 {E}_1^{t}\\ \\
 {H}_1^{t} 
\end{pmatrix}.
\end{split}
\]
Therefore 
\[
\| \begin{pmatrix}
 {E}_1^{t}- {\widetilde{E}_1}^{t}\\ \\
 {H}_1^{t} - {\widetilde{H}_1}^{t}
\end{pmatrix}\|_{L^p(\Omega_1)}
\leq
\| (I-G_{-1})^{-1}\sum_{j=0}^{\infty}G_j\|_{op}
\|\begin{pmatrix}
 {E}_1^{t}\\ \\
 {H}_1^{t} 
\end{pmatrix}\|_{L^p(\Omega_1)},
\]
where $\|\cdot\|_{op}$ represents the operator norm.
By the Lippmann-Schwinger equation, we have
\[
(I-\sum_{j=-1}^{\infty}G_j)\begin{pmatrix}
 {E}_1^{t}\\ \\
 {H}_1^{t}
\end{pmatrix}
=\begin{pmatrix}
 {E}_1^{i}\\ \\
 {H}_1^{i}
\end{pmatrix},
\]
and this leads to
\[
\begin{pmatrix}
 {E}_1^{t}\\ \\
 {H}_1^{t}
\end{pmatrix}
=(I-\sum_{j=-1}^{\infty}G_j)^{-1}\begin{pmatrix}
 {E}_1^{i}\\ \\
 {H}_1^{i}
\end{pmatrix}.
\]
Then we obtain
\[
\|\begin{pmatrix}
 {E}_1^{t}\\ \\
 {H}_1^{t}
\end{pmatrix}\|_{L^p(\Omega_1)} 
\leq \|(I-\sum_{j=-1}^{\infty}G_j)^{-1}\|_{op} \|\begin{pmatrix}
 {E}_1^{i}\\ \\
 {H}_1^{i}
\end{pmatrix}\|_{L^p(\Omega_1)}.
\]
Therefore 
\begin{equation}\label{est_A}
\begin{split}
    \|A\|_{L^p(\Omega)}
\leq (C_{G_{-1}^{1}} + C_{G_{-1}^{2}})
&\times \|(I-G_{-1})^{-1}\sum_{j=0}^{\infty}G_j\|_{op}
\|(I-\sum_{j=-1}^{\infty}G_j)^{-1}\|_{op} \\
&  \qquad \times [\|{E}_1^{i}\|_{L^p(\Omega_1)} + \|{H}_1^{i}\|_{L^p(\Omega_1)}].
\end{split}
\end{equation}
Since the incident fields are given by the fundamental solutions of the Maxwell system, 
we obtain
\[
\|{E}_1^{i}\|_{L^p(\Omega_1)} + \|{H}_1^{i}\|_{L^p(\Omega_1)}
\leq \frac{C}{|d(z_n,\Omega_1)|^3}
\]
Thus the estimate \eqref{est_A} becomes
\begin{equation}
     \|A\|_{L^p(\Omega)}
     \leq \frac{C}{|d(z_n,\Omega_1)|^3}
\end{equation}
where $C$ is proportional to $\frac{\vert V\vert}{c_\Omega}$.
\bigskip

Due to the form of the Green's tensor $G$, then repeating similar procedure, using the free Maxwell model \eqref{A-problem}, we derive the following estimates
\begin{equation}
     \|\nabla \cdot A\|_{L^p(\Omega)}, \| curl A\|_{L^p(\Omega)}, \| A\cdot \nu\|_{L^p(\partial \Omega)}
     \leq \frac{C}{|d(z_n,\Omega_1)|^p},
\end{equation}
From these estimates, we deduce that 
\begin{equation}
     \|A\|_{W^{1, p}(\Omega)}
     \leq \frac{C}{|d(z_n,\Omega_1)|^3}
\end{equation}
Then for $p>3$, using the Sobolev embedding theorem, we derive
\begin{equation}
     \vert A(x, z_n)\vert
     \leq \frac{C}{|d(z_n,\Omega_1)|^3}
\end{equation}
for $x \in \Omega$ and hence for $x=z_n$, for all $n$, where, again, $C$ is proportional to $\frac{\vert V\vert}{c_\Omega}$.
\bigskip

We will now estimate the term $B.$
By the Lippmann Swinger system of equations, we have
\[
B= \omega\int_{\Omega_1}G(x-y)\widetilde{\mathcal{M}}\begin{pmatrix}
{\widetilde{E}_1}^{t}\\ \\
 {\widetilde{H}_1}^{t}
\end{pmatrix}(y)dy
= \begin{pmatrix}
{\widetilde{E}_1}^{t}-{\widetilde{E}_1}^{i}\\ \\
 {\widetilde{H}_1}^{t}-{\widetilde{H}_1}^{i}
\end{pmatrix}
=\begin{pmatrix}
{\widetilde{E}_1}^{s}\\ \\
 {\widetilde{H}_1}^{s}
\end{pmatrix}.
\]
Observe that $({\widetilde{E}_1}^{t}, {\widetilde{H}_1}^{t}) (x, z)$ is nothing but $G_{\epsilon, \mu}(x, z) \xi$, where $\xi$ is the fixed constant vector appearing \eqref{Point-source}, and $G_{\epsilon, \mu}$ is the Green's tensor of the Maxwell system for the medium $\epsilon$ and $\mu$ in $\Omega_1$ and $\epsilon_0$ and $\mu_0$ in $\mathbb{R}^3\setminus{\Omega_1}$. By flattening the problem satisfied by this Green's tensor near the point $z_0 \in \partial \Omega_1$, and translating to the origin and rotating to $(0, 0, 1)$ as the normal there, we obtain an estimate of the form
\begin{equation}\label{G-G-0}
G_{\epsilon, \mu}(x, z)=G^0_{\epsilon, \mu}(x, z)~[1+o(1)] \mbox{ for } x, z \mbox{ near } z_0.
\end{equation}
Here $G^0_{\epsilon, \mu}(x, z)$ is the Green's fundamental tensor of the Maxwell system in the whole space $\mathbb{R}^3$ where in the upper half space we have $\epsilon_0$ and $\mu_0$ and in the lower half space we have $\epsilon$ and $\mu$. The explicit form of this tensor, for $x_3>0 \mbox{ and } z_3>0$ is given by
\begin{equation}\label{G-two-half-spaces}
G^0_{\epsilon, \mu}(x, z)=G(x, z)-G(x, z^*)
\end{equation}
see (\cite{MR1608084}, formulae (41)-(43)). We see that $\vert (G^0_{\epsilon, \mu}- G)(x, z)\vert\geq \frac{c}{d^3(x, z^*)}$, where $z^*:=(z_1, z_2, -z_3)$ is the reflected image of $z$ across the plane $\{(z_1, z_2, z_3)\in \mathbb{R}^3, \mbox{ such that } z_3=0\}$. From this property and \eqref{G-G-0}, we deduce that  $\vert (G_{\epsilon, \mu}- G)(x, z)\vert \geq \frac{c}{d^3(x, z^*)}$ and hence $\vert {\widetilde{E}_1}^{s}(x, z)\vert+\vert {\widetilde{H_1}^{s}}(x, z)\vert\geq \frac{c}{d^3(x, z^*)}$. Taking $x=z=z_n$, we get 
\begin{equation}\label{final-lower-estimate}
\vert {\widetilde{E}_1}^{s}(z_n, z_n)\vert+\vert {\widetilde{H_1}^{s}}(z_n, z_n)\vert \geq \frac{c}{d^3(z_n, \Omega_1)}.
\end{equation}
%To obtain the lower bound for $\|(B)\|_{L^2(\Omega_1)}$, it is essential to prove the lower bound of $\|{\widetilde{E}_1}^{s}\|_{L^2(\Omega_1)}$
%and $\|{\widetilde{H}_1}^{s}\|_{L^2(\Omega_1)}$. Following an argument similar to \cite[Theorem 2.3.12]{MR1853728}, we obtain
%\begin{equation}\label{lowER_b}
%|{\widetilde{E}_1}^{s}|
%\geq 
%\frac{c}{|d(z_n,\Omega_1)|^3}, \ \ 
%|{\widetilde{H}_1}^{s}|
%\geq 
%\frac{c}{|d(z_n,\Omega_1)|^3}.
%\end{equation}
Finally, we estimate the term $I_2$. Note that
\[
I_2 = \sum_{j=0}^{\infty}G_j
\begin{pmatrix}
 {E}_1^{t}\\ \\
 {H}_1^{t}
\end{pmatrix}.
\]
Therefore, it follows that, with $\Omega_1 \subset \subset \Omega$,
\[
\begin{split}
    \|I_2\|_{L^p(\Omega)}
  &  \leq \sum_{j=0}^{\infty}\|G_j\begin{pmatrix}
 {E}_1^{t}\\ \\
 {H}_1^{t}
\end{pmatrix}\|_{L^p(\Omega_1)} \\
    & \leq \sum_{j=0}^{\infty}\|G_j\|_{op} \|\begin{pmatrix}
 {E}_1^{t}\\ \\
 {H}_1^{t}
\end{pmatrix}\|_{L^p(\Omega_1)} \\
    & \leq \left[\sum_{j=0}^{\infty}\|G_j\|_{op} \right] \left[\|(I-\sum_{j=-1}^{\infty}G_j)^{-1}\|_{op}\right] \|\begin{pmatrix}
 {E}_1^{i}\\ \\
 {H}_1^{i}
\end{pmatrix}\|_{L^p(\Omega_1)}.
\end{split}
\]
Since incident fields are given by the fundamental solutions of Maxwell system, we have
\begin{equation}\label{es_I2}
 \|I_2\|_{L^p(\Omega)}
    \leq \frac{\widetilde{C}}{|d(z_n,\Omega_1)|^3}.
\end{equation}
where $\widetilde{C}>0$ is proportional to $\frac{\vert V\vert}{c_{\Omega_1}}$.

Observe that for every $j\geq 0$, $G_j\begin{pmatrix}E\\ \\
 H
\end{pmatrix}(x)$ is given in terms of the Kernel $G(x-y)$, see \eqref{G_j-equation}.
 Arguing as we did for the term $A$, we get similar estimates as \eqref{es_I2} for $\nabla \cdot I_2$, $curl\; I_2$ in $L^p(\Omega)$ and $I_2\cdot \nu$ in $L^p(\partial \Omega)$. For $p>3$, we deduce that 
 \begin{equation}\label{I-2-final-estimate}
I_2(z_n, z_n) \leq \frac{c}{d^3(z_n, \Omega_1)}.
 \end{equation}

Combining \eqref{claim1} and \eqref{I-2-final-estimate}, we obtain the required result whenever $\frac{|V|}{c_{\Omega}}\ll  1$.
\end{proof}
\bibliography{math1} 

\begin{thebibliography}{MOMMS12}

\bibitem[AN96]{Ammari96time-harmonicelectromagnetic}
H.~Ammari and J.-C. N\'{e}d\'{e}lec.
\newblock Time-harmonic electromagnetic fields in chiral media, 1996.

\bibitem[AN98]{MR1616499}
H.~Ammari and J.-C. N\'{e}d\'{e}lec.
\newblock Time-harmonic electromagnetic fields in thin chiral curved layers.
\newblock {\em SIAM J. Math. Anal.}, 29(2):395--423, 1998.

\bibitem[AWY17]{ammari2017doublenegative}
H.~Ammari, W.~Wu, and S.~Yu.
\newblock Double-negative electromagnetic metamaterials due to chirality, 2017.

\bibitem[BL22]{MR4385553}
G.~Bao and P.~Li.
\newblock {\em Maxwell's equations in periodic structures}, volume 208 of {\em
  Applied Mathematical Sciences}.
\newblock Springer, Singapore; Science Press Beijing, Beijing, [2022]
  \copyright 2022.

\bibitem[CH98]{MR1608084}
P.-M. Cutzach and C.~Hazard.
\newblock Existence, uniqueness and analyticity properties for electromagnetic
  scattering in a two-layered medium.
\newblock {\em Math. Methods Appl. Sci.}, 21(5):433--461, 1998.

\bibitem[CK83]{MR0700400}
D.~L. Colton and R.~Kress.
\newblock {\em Integral equation methods in scattering theory}.
\newblock Pure and Applied Mathematics (New York). John Wiley \& Sons, Inc.,
  New York, 1983.
\newblock A Wiley-Interscience Publication.

\bibitem[CMM82]{MR672839}
R.~R. Coifman, A.~McIntosh, and Y.~Meyer.
\newblock L'int\'{e}grale de {C}auchy d\'{e}finit un op\'{e}rateur born\'{e}
  sur {$L^{2}$} pour les courbes lipschitziennes.
\newblock {\em Ann. of Math. (2)}, 116(2):361--387, 1982.

\bibitem[CS79]{MR0542950}
J.~Cooper and W.~Strauss.
\newblock Representations of the scattering operator for moving obstacles.
\newblock {\em Indiana Univ. Math. J.}, 28(4):643--671, 1979.

\bibitem[CS85a]{MR0773393}
J.~Cooper and W.~Strauss.
\newblock Abstract scattering theory for time-periodic systems with
  applications to electromagnetism.
\newblock {\em Indiana Univ. Math. J.}, 34(1):33--83, 1985.

\bibitem[CS85b]{MR0807903}
J.~Cooper and W.~Strauss.
\newblock The initial boundary problem for the {M}axwell equations in the
  presence of a moving body.
\newblock {\em SIAM J. Math. Anal.}, 16(6):1165--1179, 1985.

\bibitem[DL23]{MR4651388}
H.~Diao and H.~Liu.
\newblock {\em Spectral {G}eometry and {I}nverse {S}cattering {T}heory}.
\newblock Springer, Cham, 2023.

\bibitem[FJR78]{MR501367}
E.~B. Fabes, M.~Jodeit, Jr., and N.~M. Rivi\`ere.
\newblock Potential techniques for boundary value problems on
  {$C^{1}$}-domains.
\newblock {\em Acta Math.}, 141(3-4):165--186, 1978.

\bibitem[GT01]{MR1814364}
D.~Gilbarg and N.~S. Trudinger.
\newblock {\em Elliptic partial differential equations of second order}.
\newblock Classics in Mathematics. Springer-Verlag, Berlin, 2001.
\newblock Reprint of the 1998 edition.

\bibitem[Isa06]{MR2193218}
V.~Isakov.
\newblock {\em Inverse problems for partial differential equations}, volume 127
  of {\em Applied Mathematical Sciences}.
\newblock Springer, New York, second edition, 2006.

\bibitem[Lur17]{Lurie}
K.~A. Lurie.
\newblock {\em An introduction to the mathematical theory of dynamic
  materials}, volume~15 of {\em Advances in Mechanics and Mathematics}.
\newblock Springer, Cham, 2017.
\newblock Second edition of [ MR2305885].

\bibitem[Mit95]{MR1317629}
M.~Mitrea.
\newblock The method of layer potentials in electromagnetic scattering theory
  on nonsmooth domains.
\newblock {\em Duke Math. J.}, 77(1):111--133, 1995.

\bibitem[Mit01]{+2001+423+446}
M.~Mitrea.
\newblock The method of layer potentials for electromagnetic waves in chiral
  media.
\newblock {\em Forum Math.}, 13(3):423--446, 2001.

\bibitem[ML07a]{Lakhtakia-2}
T.~G. Mackay and A.~Lakhtakia.
\newblock Corrigendum: ``{O}n electromagnetics of an isotropic chiral medium
  moving at constant velocity'' [{P}roc. {R}. {S}oc. {L}ond. {S}er. {A} {M}ath.
  {P}hys. {E}ng. {S}ci. {\bf 463} (2007), no. 2078, 397--418; mr2288827].
\newblock {\em Proc. R. Soc. Lond. Ser. A Math. Phys. Eng. Sci.},
  463(2085):2399, 2007.

\bibitem[ML07b]{Lakhtakia-1}
T.~G. Mackay and A.~Lakhtakia.
\newblock On electromagnetics of an isotropic chiral medium moving at constant
  velocity.
\newblock {\em Proc. R. Soc. Lond. Ser. A Math. Phys. Eng. Sci.},
  463(2078):397--418, 2007.

\bibitem[MM98]{MR1642603}
D.~Mitrea and M.~Mitrea.
\newblock Uniqueness for inverse conductivity and transmission problems in the
  class of {L}ipschitz domains.
\newblock {\em Comm. Partial Differential Equations}, 23(7-8):1419--1448, 1998.

\bibitem[MMP97]{MR1438894}
D.~Mitrea, M.~Mitrea, and J.~Pipher.
\newblock Vector potential theory on nonsmooth domains in {${\bf R}^3$} and
  applications to electromagnetic scattering.
\newblock {\em J. Fourier Anal. Appl.}, 3(2):131--192, 1997.

\bibitem[MOMMS12]{Emilio:Irina:Marius:Qiang:2012}
E.~Marmolejo-Olea, I.~Mitrea, M.~Mitrea, and Q.~Shi.
\newblock Transmission boundary problems for {D}irac operators on {L}ipschitz
  domains and applications to {M}axwell's and {H}elmholtz's equations.
\newblock {\em Trans. Amer. Math. Soc.}, 364(8):4369--4424, 2012.

\bibitem[Mon03]{MR2059447}
P.~Monk.
\newblock {\em Finite element methods for {M}axwell's equations}.
\newblock Numerical Mathematics and Scientific Computation. Oxford University
  Press, New York, 2003.

\bibitem[N\'01]{MR1822275}
J.-C. N\'{e}d\'{e}lec.
\newblock {\em Acoustic and electromagnetic equations}, volume 144 of {\em
  Applied Mathematical Sciences}.
\newblock Springer-Verlag, New York, 2001.
\newblock Integral representations for harmonic problems.

\bibitem[Pot01]{MR1853728}
R.~Potthast.
\newblock {\em Point sources and multipoles in inverse scattering theory},
  volume 427 of {\em Chapman \& Hall/CRC Research Notes in Mathematics}.
\newblock Chapman \& Hall/CRC, Boca Raton, FL, 2001.

\bibitem[Ste91]{MR1115177}
P.~D. Stefanov.
\newblock Inverse scattering problem for moving obstacles.
\newblock {\em Math. Z.}, 207(3):461--480, 1991.

\bibitem[SU87]{MR0873380}
J.~Sylvester and G.~Uhlmann.
\newblock A global uniqueness theorem for an inverse boundary value problem.
\newblock {\em Ann. of Math. (2)}, 125(1):153--169, 1987.

\bibitem[Ver84]{MR769382}
G.~Verchota.
\newblock Layer potentials and regularity for the {D}irichlet problem for
  {L}aplace's equation in {L}ipschitz domains.
\newblock {\em J. Funct. Anal.}, 59(3):572--611, 1984.

\end{thebibliography}

\bibliographystyle{alpha}
\end{document}